%
%
%
%
%
\RequirePackage{fix-cm}
\documentclass{svjour3}                     
\smartqed  
\usepackage{graphicx}
%
%
%
%
%
\usepackage{amsmath}
\usepackage{amssymb}
\usepackage{physics}
\usepackage{mathtools}
\usepackage{braket}
\DeclareMathOperator*{\argmax}{arg\,max}
\DeclareMathOperator*{\argmin}{arg\,min}

\begin{document}

\title{Existence and uniqueness of solution of the differential equation describing the TASEP-LK coupled transport process
}
\subtitle{}


\author{Jingwei Li \and
        Yunxin Zhang 
}


\institute{Jingwei Li \at
              Shanghai Center for Systems Biomedicine, Shanghai Jiao Tong University, Shanghai 200240, China \\
              \email{ljw2017@sjtu.edu.cn}           
           \and
           Yunxin Zhang \at
              Shanghai Key Laboratory for Contemporary Applied Mathematics, Centre for Computational Systems Biology, School of Mathematical Sciences, Fudan University, Shanghai 200433, China \\
              \email{xyz@fudan.edu.cn} 
}

\date{Received: date / Accepted: date}

\maketitle

\begin{abstract}
We study the existence and uniqueness of solution of a evolutionary partial differential equation originating from the continuum limit of a coupled process of totally asymmetric simple exclusion process (TASEP) and Langmuir kinetics (LK). In the fields of physics and biology, the TASEP-LK coupled process has been extensively studied by Monte Carlo simulations, numerical computations, and detailed experiments. However, no rigorous mathematical analysis so far has been given for the corresponding differential equations, especially the existence and uniqueness of their solutions. In this paper, we prove the existence of the $C^\infty[0,1]$ steady-state solution by the method of upper and lower solution, and the uniqueness in both $W^{1,2}(0,1)$ and $L^\infty(0,1)$ by a generalized maximum principle. We further prove the global existence and uniqueness of the time-dependent solution in $C([0,1]\times [0,+\infty))\cap C^{2,1}([0,1]\times (0,+\infty))$, which, for any continuous initial value, converges to the steady-state solution in $C[0,1]$ (global attractivity). Our results support the numerical calculations and Monte Carlo simulations, and provide theoretical foundations for the TASEP-LK coupled process, especially the most important phase diagram of particle density along the travel track under different model parameters, which is difficult because the boundary layers (at one or both boundaries) and domain wall (separating high and low particle densities) may appear as the length of the travel track tends to infinity. The methods used in this paper may be instructive for studies of the more general cases of the TASEP-LK process, such as the one with multiple travel tracks and/or multiple particle species.
\keywords{TASEP-LK \and upper and lower solution \and phase}
\end{abstract}

\section{Introduction}
\label{intro}

Active transport along filamentous track driven by molecular motors is one of basic mechanisms of intracellular transport, and the case of single isolated motor has been studied extensively \cite{AstumianSymmetry2008,AstumianThermodynamics2010}. In order to describe traffic-like collective movements of many motors simultaneously on the same filamentous track, Aghababaie et al propose a model based on an abstract formulation of Brownian ratchet \cite{AghababaieMenon1999}. Subsequent works however, are generally based on the asymmetric simple exclusion process (ASEP) \cite{SchutzExactly2001}.

In ASEP, two motors cannot occupy the same lattice site (simple exclusion), and motors prefer to move in one direction (asymmetric). ASEP is further specialized to totally asymmetric simple exclusion process (TASEP) by forcing motors to move only in one direction (totally asymmetric). Steady-state solutions of TASEP have been obtained by various methods \cite{Krug1991,DerridaMatrix1993,DerridaRecursion1992,SchutzRecursion1993}.

Most models of molecular motor traffic in practice \cite{LipowskyKlumpp2001,NieuwenhuizenKlumpp2002,KlumppLipowsky2004,KlumppLipowskyTraffic2003,NieuwenhuizenKlumppRandom2004,KlumppNieuwenhuizenSelf2005,LipowskyKlumppLife2005,EvansJuhaszShock2003,JuhaszSantenDynamics2004,Popkov2003} incorporate Langmuir kinetics (LK) that motors can attach and detach filamentous track. Such a TASEP-LK coupled process is deeply discussed in \cite{ParmeggianiPRL2003,Parmeggiani2004,Zhang20101,Zhang2012}. A rich steady-state phase diagram, with high and low density phases, two and three phase coexistence regions, and a boundary independent ``Meissner" phase, is found by considering a continuum limit \cite{ParmeggianiPRL2003,Zhang20101,Zhang2012}. Such profiles of particle density are very different from those of pure TASEP \cite{DerridaMatrix1993,DerridaRecursion1992,SchutzRecursion1993}, which may be considered as the limiting case of TASEP-LK coupled process when attachment and detachment rates of LK tend to zero \cite{Parmeggiani2004}.

The experimental observations of the motor protein Kip3 (in the kinesin-8 family) \cite{LeducGehleMolecular2012} are reproduced by the simulation of Parmeggiani-Franosh-Frey model \cite{ParmeggianiPRL2003}. In \cite{NishinariOkadaIntracellular2005}, the authors introduce a generalized ASEP-LK coupled process, which captures most of the biochemistry of KIF1A motor, and successfully predicts the position of the domain wall in their experiment.

Until now, the steady-state solution of the TASEP-LK coupled process has not been obtained explicitly. The recursion method for pure TASEP \cite{DerridaRecursion1992,SchutzRecursion1993} is too technical to generalize to TASEP-LK coupled process. On the contrary, the matrix product ansatz for pure TASEP \cite{DerridaMatrix1993} is tidy, but the network structure of TASEP-LK coupled process prevents a direct implementation of it \cite{Parmeggiani2004}.

By mean-field approximation \cite{KrugPRL1991}, the TASEP-LK coupled process is transformed to Eq. \eqref{continuumlimitintroduction}, which is a semi-linear initial value parabolic problem with Dirichlet boundary condition \cite{Parmeggiani2004}.

Eq. \eqref{ellipticequationintroduction} is the corresponding time-independent semi-linear elliptic problem of Eq. \eqref{continuumlimitintroduction}, which has been solved numerically in \cite{Parmeggiani2004} and exhibits the same phase diagram as the TASEP-LK coupled process. In this paper, we prove rigorously the results in \cite{Parmeggiani2004} and some further claims.
\begin{enumerate}
\item Eq. \eqref{ellipticequationintroduction} has a unique $C^\infty[0,1]$ solution.
\item The phase diagram of the solution of Eq. \eqref{ellipticequationintroduction} coincides with the numerical one in \cite{ParmeggianiPRL2003,Parmeggiani2004}.
\item Eq. \eqref{continuumlimitintroduction} has a unique $C([0,1]\times [0,+\infty))\cap C^{2,1}([0,1]\times (0,+\infty))$ solution for continuous initial value, which tends to the solution of Eq. \eqref{ellipticequationintroduction} uniformly (global attractivity).
\end{enumerate}

Inspired by the idea used to study a diffusive logistic equation originating from population models in disrupted environments  \cite{Lam2016,SkellamBiometrika1951,CantrellProceedings1989,CantrellSIAM1991,CantrellJoMB1991,LustscherTheoretical2007},
we prove the existence and phase diagram of the solution of Eq. \eqref{ellipticequationintroduction} by the method of upper and lower solution \cite{Du2006}. The uniqueness of the solution of Eq. \eqref{ellipticequationintroduction} is obtained by the comparison principle for divergence form operator \cite{TrudingerArchive1974,Gilbarg2001}. The nonlinear part of Eq. \eqref{continuumlimitintroduction} has divergence form, so {\bf Proposition 7.3.6} in \cite{Lunardi1995} promises the global existence and uniqueness of its solution. The global attractivity is proved by Theorem 3.1 in \cite{Smith1995} since Eq. \eqref{continuumlimitintroduction} is a monotone dynamical system with a unique steady state.

This paper is organized as follows. In Section \ref{model}, we introduce the TASEP-LK coupled process briefly and derive its continuum limit Eq. \eqref{continuumlimitintroduction}. We prove the global existence and uniqueness of the $C([0,1]\times [0,+\infty))\cap C^{2,1}([0,1]\times (0,+\infty))$ solution of Eq. \eqref{continuumlimitintroduction} in Section \ref{partglobalexiuni}, whose global attractivity in $C[0,1]$ is proved by the monotone semiflow theory in Section \ref{partglobalattra}. In Section \ref{uniqueness}, we prove the uniqueness of the $L^\infty(0,1)$ solution of Eq. \eqref{ellipticequationintroduction} by the theory of quasi-linear elliptic equation \cite{Gilbarg2001}, and show that the $L^\infty(0,1)$ solution has $C^{\infty}[0,1]$ regularity. In Sections \ref{phasespesec}, \ref{steadySpecialCases}, \ref{phasegensec}, \ref{steadyGeneralCases}, we use the method of upper and lower solution to prove the existence of a $W^{1,2}(0,1)$ steady-state solution, which has the same phase diagram specified numerically in \cite{Parmeggiani2004}. Finally, conclusions and remarks are presented in Section \ref{conclusion}.

\section{TASEP-LK coupled process}\label{model}
Fig. \ref{TASEP_diagram_figure} gives diagram of the TASEP-LK coupled process.
\begin{figure}
  \centering
  \includegraphics[scale=0.6]{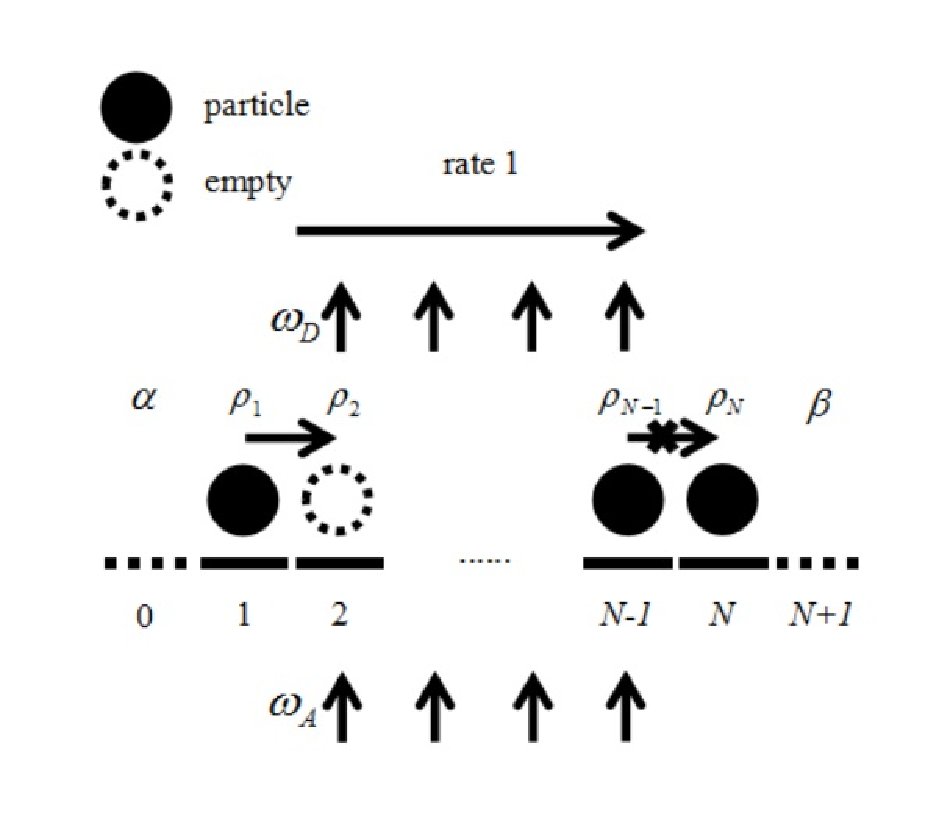}\\
  \caption{A diagram of the TASEP-LK coupled process with $N+2$ sites. Particles move from left to right along a one-dimensional lattice and exclude each other. Sites $0$ and $N+1$ have fixed particle densities $\alpha$ and $\beta$, respectively. Particles at site $i$ hop forward to site $i+1$ if site $i+1$ is vacant (TASEP). Particles can attach and detach the main body of the lattice with rates $\omega_A$ and $\omega_D$, respectively (LK).}
  \label{TASEP_diagram_figure}
\end{figure}
In TASEP, particles of the same species hop unidirectionally with constant rate (usually normalized to be unit) and spatial exclusion (particle at site $i$ can hop to site $i+1$ only if site $i+1$ is empty), along a one-dimensional lattice of $N+2$ sites, labeled from $0$ to $N+1$. In LK, particles attach and detach the main body of the lattice (site $1$ to site $N$) with rates $\omega_A$ and $\omega_D$ respectively \cite{Parmeggiani2004,Zhang2012}. Denote $\epsilon\equiv 1/(N+1)$. Initially, site $i$ is occupied with probability $\sigma(\epsilon i)$. Let $\phi_i(t)$ for $i\in[0,N+1]$ be the probability that site $i$ is occupied at time $t$. At boundaries, $\phi_0(t)\equiv \alpha$ and $\phi_{N+1}(t)\equiv \overline{\beta}\equiv 1-\beta$. By mean-field approximation \cite{KrugPRL1991}, $\phi_i(t)$ satisfies \cite{Parmeggiani2004,Zhang2012}
\begin{eqnarray}\label{TASEPmeanfield2}
\left\{\begin{array}{ll}
\frac{d\phi_i}{dt}=\phi_{i-1}(1-\phi_i)-\phi_i(1-\phi_{i+1})+\omega_A(1-\phi_i)-\omega_D\phi_i, & 1\le i\le N\land t>0, \\
\phi_0=\alpha, \quad \phi_{N+1}=\overline{\beta}, & t>0, \\
\phi_i(0)=\sigma(\epsilon i), & 0\le i\le N+1.
\end{array}\right.
\end{eqnarray}
$\forall i\in[0,N+1]$, $\phi(x,t)\equiv\phi_i(t)$, where $x\equiv \epsilon i$. Assume that $\Omega_A\equiv\omega_A/\epsilon$ and $\Omega_D\equiv\omega_D/\epsilon$ are nonzero constants ($\omega_A$ and $\omega_D$ are of order $\epsilon^\gamma$ with $\gamma=1$) because for $\gamma\ne1$, the TASEP-LK coupled process reduces to either pure TASEP or LK process \cite{Popkov2003,Zhang20131}.

Substitute $\phi(x\pm\epsilon,t)=\phi(x,t)\pm\epsilon\phi_x(x,t)+\frac{1}{2}\epsilon^2\phi_{xx}(x,t)+O(\epsilon^3)$ into Eq. \eqref{TASEPmeanfield2}.
\begin{eqnarray}\label{continuumlimittape}
\left\{\begin{array}{ll}
\phi_t=\epsilon[\frac{\epsilon}{2}\phi_{xx}+(2\phi-1)\phi_x+\Omega_A(1-\phi)-\Omega_D\phi]+O(\epsilon^3), & \frac{1}{N+1}\le x \le \frac{N}{N+1}, \\
 & t>0, \\
\phi(0,t)=\alpha, \quad \phi(1,t)=\overline{\beta}, & t>0, \\
\phi(x,0)=\sigma(x), & 0\le x\le 1.
\end{array}\right.
\end{eqnarray}
For $\epsilon\ll 1$ ($N\gg 1$), neglecting $O(\epsilon^3)$ in Eq. \eqref{continuumlimittape}, we have
\begin{eqnarray}\label{continuumlimitintroduction}
\left\{\begin{array}{ll}
\phi_t=\epsilon\qty[\frac{\epsilon}{2}\phi_{xx}+(2\phi-1)\phi_x+\Omega_A(1-\phi)-\Omega_D\phi], & 0< x <1\land t>0, \\
\phi(0,t)=\alpha,\quad \phi(1,t)=\overline{\beta}, & t>0, \\
\phi(x,0)=\sigma(x), & 0\le x \le1.
\end{array}\right.
\end{eqnarray}
The corresponding time-independent semi-linear elliptic problem of Eq. \eqref{continuumlimitintroduction} is
\begin{eqnarray}\label{ellipticequationintroduction}
\left\{\begin{array}{ll}
L\rho\equiv\frac{\epsilon}{2}\rho_{xx}+(2\rho-1)\rho_x+\Omega_A(1-\rho)-\Omega_D\rho=0, & 0< x <1, \\
\rho(0)=\alpha,\quad \rho(1)=\overline{\beta}. &
\end{array}\right.
\end{eqnarray}

A particle entering the lattice from the left end is always accompanied by a hole leaving the lattice from the left end, and a particle leaving the lattice from the right end is always accompanied by a hole entering the lattice from the right end; a particle hopping right along the lattice is always accompanied by a hole hopping left along the lattice; a particle attaching the lattice is always accompanied by a hole detaching the lattice, and a particle detaching the lattice is always accompanied by a hole attaching the lattice. This is called particle-hole symmetry. Mathematically, $\overline{\phi}(\overline{x},t)\equiv 1-\phi(x,t)$ is the hole density, where $\overline{x}\equiv 1-x$. By Eq. \eqref{continuumlimitintroduction}, $\overline{\phi}$ satisfies
\begin{eqnarray}\label{continuumlimitintroductionsymhole}
\left\{\begin{array}{ll}
\overline{\phi}_t=\epsilon[\frac{\epsilon}{2}\overline{\phi}_{\overline{x}\overline{x}}+(2\overline{\phi}-1)\overline{\phi}_{\overline{x}}+\Omega_D(1-\overline{\phi})-\Omega_A\overline{\phi}], \quad & 0< \overline{x} <1,\ t>0, \\
\overline{\phi}(0,t)=\beta,\quad \overline{\phi}(1,t)=1-\alpha, & t>0, \\
\overline{\phi}(\overline{x},0)=1-\sigma(1-\overline{x}), & 0\le \overline{x} \le1.
\end{array}\right.
\end{eqnarray}
Eq. \eqref{continuumlimitintroductionsymhole} has the same form as Eq. \eqref{continuumlimitintroduction}, but with $\Omega_A$, $\Omega_D$, $\alpha$, $\beta$ replaced by $\Omega_D$, $\Omega_A$, $\beta$, $\alpha$. Particle-hole symmetry allows one to assume $K\equiv\Omega_A/\Omega_D\ge 1$ without loss of generality, since otherwise, one may study $\overline{\phi}$ instead of $\phi$. Particularly, if $K\equiv\Omega_A/\Omega_D=1$, one may assume $\alpha\ge \beta$.

\section{Global existence and uniqueness of the $C([0,1]\times [0,+\infty))\cap C^{2,1}([0,1]\times (0,+\infty))$ solution of Eq. \eqref{continuumlimitintroduction}}
\label{partglobalexiuni}

\begin{theorem}\label{TheSteadySolu}
Eq. \eqref{ellipticequationintroduction} has a $C^\infty[0,1]$ solution $\rho^\epsilon$, which is unique in $L^\infty(0,1)$.
\end{theorem}
\begin{proof}
By Theorems \ref{TheEqOmega} and \ref{TheGeOmega}, Lemma \ref{LemW12Cinftyregularity}, Eq. \eqref{ellipticequationintroduction} has a $C^\infty[0,1]$ solution $\rho^\epsilon$. By Lemmas \ref{C1uniqueness} and \ref{LemLinftyCinftyregularity}, $\rho^\epsilon$ is unique in $L^\infty(0,1)$.
\end{proof}

\begin{theorem}\label{GlobalExsitenceUniquenessTheoofg}
$\forall \kappa\in C_0[0,1]$, the homogeneous Dirichlet boundary problem
\begin{eqnarray}\label{differenceparabolicsectorialform}
\left\{\begin{array}{ll}
\phi_t=\mathcal{A}\phi+\epsilon(\phi^2)_x, & t>0,\ 0<x<1,\\
\phi(0,t)=0,\ \phi(1,t)=0, & t>0, \\
\phi(x,0)=\kappa(x), & 0\le x\le 1,
\end{array}\right.
\end{eqnarray}
with
\begin{eqnarray*}
\mathcal{A}\phi\equiv\frac{\epsilon^2}{2}\phi_{xx}+\epsilon (2\rho^\epsilon-1)\phi_x+\epsilon [2\rho^\epsilon_x-(K+1)\Omega_D]\phi
\end{eqnarray*}
has a unique solution $\eta^{\epsilon,\kappa}\in C([0,1]\times [0,+\infty))\cap C^{2,1}([0,1]\times (0,+\infty))$. $t^{1/2}\eta^{\epsilon,\kappa}_x$ is bounded near $t=0$. The mapping $\Phi_0(t,\kappa)\coloneqq \eta^{\epsilon,\kappa}(\cdot,t):[0,+\infty)\times C_0[0,1]\mapsto C_0[0,1]$ is continuous ($\Phi_0(t,\kappa)$ is semiflow in $C_0[0,1]$). Fixing $\tau>0$, the mapping $\Phi_0(\tau,\kappa):C_0[0,1]\mapsto C_0[0,1]\cap C^1[0,1]$ is continuous.
\end{theorem}
\begin{proof}
Eq. \eqref{differenceparabolicsectorialform} is a second order equation with nonlinearities in divergence form. $\mathcal{A}$ is a time-independent elliptic second order differential operator with continuous coefficients, and $\eta\mapsto \eta^2:\mathbb{R}\mapsto \mathbb{R}$ is twice continuously differentiable. By {\bf Proposition 7.3.6} in \cite{Lunardi1995}, Eq. \eqref{differenceparabolicsectorialform} has a unique solution $\eta^{\epsilon,\kappa}\in C([0,1]\times [0,+\infty))$, $\eta^{\epsilon,\kappa}_x,\eta^{\epsilon,\kappa}_t,\mathcal{A}\eta^{\epsilon,\kappa}\in C([0,1]\times (0,+\infty))$. Therefore,
\begin{eqnarray*}
&&\frac{\epsilon^2}{2}\eta^{\epsilon,\kappa}_{xx}=\mathcal{A}\eta^{\epsilon,\kappa}-\epsilon (2\rho^\epsilon-1)\eta^{\epsilon,\kappa}_x-\epsilon [2\rho^\epsilon_x-(K+1)\Omega_D]\eta^{\epsilon,\kappa}\\
&&\in C([0,1]\times (0,+\infty)).
\end{eqnarray*}
In summary, $\eta^{\epsilon,\kappa}\in C^{2,1}([0,1]\times (0,+\infty))$.

Boundness of $t^{1/2}\eta^{\epsilon,\kappa}_x$ near $t=0$ and continuity of $\Phi_0(t,\kappa):[0,+\infty)\times C_0[0,1]\mapsto C_0[0,1]$ also come from {\bf Proposition 7.3.6} in \cite{Lunardi1995}. Continuity of $\Phi_0(\tau,\kappa):C_0[0,1]\mapsto C_0[0,1]\cap C^1[0,1]$ for fixed $\tau>0$ comes from estimate (7.1.18) of {\bf Theorem 7.1.5} in \cite{Lunardi1995}.
\end{proof}

\begin{corollary}\label{GlobalExsitenceUniquenessTheoofrho}
$\forall \sigma\in X\coloneqq\set{f\in C[0,1] \mid f(0)=\alpha,f(1)=1-\beta}$, Eq. \eqref{continuumlimitintroduction} has a unique solution $\phi^{\epsilon,\sigma}\in C([0,1]\times [0,+\infty))\cap C^{2,1}([0,1]\times (0,+\infty))$. $t^{1/2}\phi^{\epsilon,\sigma}_x$ is bounded near $t=0$. The mapping $\Phi(t,\sigma)\coloneqq\phi^{\epsilon,\sigma}(\cdot,t):[0,+\infty)\times X\mapsto X$ is continuous ($\Phi(t,\sigma)$ is semiflow in $X$). Fixing $\tau>0$, the mapping $\Phi(\tau,\sigma):X\mapsto X\cap C^1[0,1]$ is continuous.
\end{corollary}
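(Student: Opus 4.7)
The plan is to reduce the corollary directly to Theorem \ref{GlobalExsitenceUniquenessTheoofg} via the affine substitution $\phi^\epsilon = g + \rho^\epsilon$, exploiting that $\rho^\epsilon \in C^\infty[0,1]$ is the unique steady state (proved in Parts \ref{uniqueness}--\ref{steadyGeneralCases}). First, observe that the map $T:X \to C_0[0,1]$ defined by $T\sigma := \sigma - \rho^\epsilon$ is an affine homeomorphism: it is well-defined because $\sigma(0)-\rho^\epsilon(0) = \alpha - \alpha = 0$ and $\sigma(1)-\rho^\epsilon(1) = (1-\beta)-(1-\beta) = 0$, and its inverse $g_0 \mapsto g_0 + \rho^\epsilon$ is an isometric translation in the sup norm.

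Next, verify equivalence of Eq. (\ref{continuumlimitintroduction}) for $\phi^\epsilon$ with initial datum $\sigma$ and Eq. (\ref{differenceparabolicsectorialform}) for $g$ with initial datum $g_0 = T\sigma$. Substituting $\phi^\epsilon = g + \rho^\epsilon$, using $\phi^\epsilon_t = g_t$, expanding $(2\phi^\epsilon-1)\phi^\epsilon_x$ and the linear terms, and then subtracting the stationary equation $L\rho^\epsilon = 0$ cancels the $\rho^\epsilon$-only contributions. What remains can be arranged as
\begin{equation*}
g_t = \tfrac{\epsilon^2}{2}g_{xx} + \epsilon\bigl[g^2 + g(2\rho^\epsilon-1)\bigr]_x - \epsilon(K+1)\Omega_D g,
\end{equation*}
exactly Eq. (\ref{differenceparabolicsectorialform}); the boundary conditions match trivially. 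Thus $\phi^\epsilon$ solves Eq. (\ref{continuumlimitintroduction}) in the class $C([0,1]\times[0,\infty))\cap C^{2,1}([0,1]\times(0,\infty))$ with datum $\sigma$ if and only if $g = \phi^\epsilon - \rho^\epsilon$ solves Eq. (\ref{differenceparabolicsectorialform}) in the same class with datum $g_0 = T\sigma$.

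Combining these two facts with Theorem \ref{GlobalExsitenceUniquenessTheoofg} immediately yields existence, uniqueness, and the asserted regularity for $\phi^\epsilon(\cdot,\cdot;\sigma) = \Phi_0(\cdot,T\sigma) + \rho^\epsilon$; since $\rho^\epsilon$ and its derivatives are smooth and time-independent, the singular bound $t^{1/2}\phi^\epsilon_x = t^{1/2}g_x + t^{1/2}\rho^\epsilon_x$ inherits its boundedness near $t=0$ from the corresponding bound for $g_x$. For the semiflow property, write $\Phi(t,\sigma) = \Phi_0(t,T\sigma) + \rho^\epsilon$. Continuity of $\Phi:[0,\infty)\times X\to X$ follows from continuity of $\Phi_0:[0,\infty)\times C_0[0,1]\to C_0[0,1]$ composed with the continuous affine homeomorphism $T$ and the continuous translation by $\rho^\epsilon$; likewise, continuity of $\Phi(\tau,\cdot):X\to X\cap C^1[0,1]$ for fixed $\tau>0$ follows from the corresponding continuity of $\Phi_0(\tau,\cdot):C_0[0,1]\to C_0[0,1]\cap C^1[0,1]$ supplied by Theorem \ref{GlobalExsitenceUniquenessTheoofg}, because $\rho^\epsilon$ is a fixed element of $C^\infty[0,1]\subset C^1[0,1]$.

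There is no substantive analytic obstacle here; the entire work has been packaged into Theorem \ref{GlobalExsitenceUniquenessTheoofg} and into the existence of $\rho^\epsilon$ from later parts. The only point requiring a little care is the algebraic verification that the nonlinear and linear drift terms rearrange into the divergence-form expression $\epsilon[g^2 + g(2\rho^\epsilon-1)]_x$, so that the hypotheses of Theorem \ref{GlobalExsitenceUniquenessTheoofg} are met exactly. Once this is confirmed, the corollary is a direct translation argument.
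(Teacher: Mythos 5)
Your proposal is correct and is exactly the argument the paper intends: the paper sets up the decomposition $g=\phi^\epsilon-\rho^\epsilon$ and Eq.~(\ref{differenceparabolicsectorialform}) immediately before Theorem~\ref{GlobalExsitenceUniquenessTheoofg} precisely so that Corollary~\ref{GlobalExsitenceUniquenessTheoofrho} follows by this translation, and it leaves the verification implicit. Your write-up simply makes explicit the same steps (the affine homeomorphism $T\sigma=\sigma-\rho^\epsilon$, the algebraic identity $(2g+2\rho^\epsilon-1)g_x+2g\rho^\epsilon_x=[g^2+g(2\rho^\epsilon-1)]_x$ after subtracting $\epsilon L\rho^\epsilon=0$, and composition of continuous maps), all of which check out.
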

\begin{proof}
Note that $\phi^{\epsilon,\sigma}$ solves Eq. \eqref{continuumlimitintroduction} iff $\phi^{\epsilon,\sigma}-\rho^\epsilon=\eta^{\epsilon,\sigma-\rho^\epsilon}$, which solves Eq. \eqref{differenceparabolicsectorialform} with $\kappa=\sigma-\rho^\epsilon$. The proof is then straight forward.
\end{proof}

\section{Global attractivity of Eq. \eqref{continuumlimitintroduction} in $C[0,1]$}
\label{partglobalattra}

\begin{theorem}\label{Globalattractivityofrho}
$\forall \sigma\in X$, $\lim_{t\to\infty}\norm{\Phi(t,\sigma)-\rho^\epsilon}_{C[0,1]}=0$.
\end{theorem}

To prove Theorem \ref{Globalattractivityofrho}, we first prove some lemmas.
\begin{lemma}\label{boundorbit}
$\forall B\subset X$, if $\exists M_u,M_l\in\mathbb{R}$, $\forall \sigma\in B$, $M_l\le\sigma\le M_u$ in $[0,1]$ ($B$ is bounded in $X$), then $\forall \sigma\in B$, $\forall t\ge 0$, $\forall x\in[0,1]$, $\min(M_l,0)\le\Phi(t,\sigma)\le \max(M_u,1)$ (the orbit of $B$ is bounded in $X$).
\end{lemma}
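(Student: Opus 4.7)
The plan is to apply a parabolic comparison principle with constant super- and subsolutions. Write the right-hand side of Eq. (\ref{continuumlimitintroduction}) as $F[\phi] := \epsilon[\frac{\epsilon}{2}\phi_{xx} + (2\phi-1)\phi_x + \Omega_A(1-\phi) - \Omega_D \phi]$, so the PDE is $\phi^\epsilon_t = F[\phi^\epsilon]$. Set $M := \max(M_u,1)$ and $m := \min(M_l,0)$. For the upper bound, I would verify three points about the constant function $M$: first, $M \ge 1 \ge \alpha$ and $M \ge 1 \ge 1-\beta$, so $M$ dominates the Dirichlet data; second, by hypothesis $\sigma \le M_u \le M$ dominates the initial data; third, $F[M] = \epsilon[\Omega_A(1-M) - \Omega_D M] \le 0$, which holds precisely because $M \ge 1 > \Omega_A/(\Omega_A+\Omega_D)$. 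Hence $M$ is a constant classical supersolution.

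Next, I would let $w := \phi^\epsilon - M$ and subtract the PDE evaluated at $M$ from the one at $\phi^\epsilon$. The quadratic drift linearizes cleanly as $(2\phi^\epsilon-1)\phi^\epsilon_x - (2M-1)\cdot 0 = (2\phi^\epsilon - 1)w_x$, and the Langmuir reaction contributes $-(\Omega_A+\Omega_D)w$, producing
\begin{equation*}
w_t - \tfrac{\epsilon^2}{2} w_{xx} - \epsilon(2\phi^\epsilon-1) w_x + \epsilon(\Omega_A+\Omega_D) w \;=\; F[M] \;\le\; 0.
\end{equation*}
By Corollary \ref{GlobalExsitenceUniquenessTheoofrho}, $\phi^\epsilon$ and $\phi^\epsilon_x$ are continuous on $[0,1]\times[\tau,T]$ for any $0<\tau<T$, so the coefficients of this linear parabolic inequality are bounded and continuous, and the zeroth-order coefficient $\epsilon(\Omega_A+\Omega_D)$ is strictly positive. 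The standard weak maximum principle for linear parabolic inequalities then rules out a positive interior maximum of $w$ on $[0,1]\times[\tau,T]$; combined with $w\le 0$ on the parabolic boundary and continuity at $t=0$, we obtain $w\le 0$, i.e.\ $\phi^\epsilon \le M$, throughout $[0,1]\times[0,+\infty)$.

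The lower bound is symmetric: $m \le 0 \le \alpha,\,1-\beta$ dominates the boundary data from below, $m \le M_l \le \sigma$ dominates the initial data from below, and $F[m] = \epsilon[\Omega_A(1-m) - \Omega_D m] \ge 0$ because $m \le 0$. Setting $\widetilde{w} := m - \phi^\epsilon$ yields the analogous inequality $\widetilde{w}_t - \tfrac{\epsilon^2}{2}\widetilde{w}_{xx} + \epsilon(2\phi^\epsilon-1)\widetilde{w}_x$ $+\epsilon(\Omega_A+\Omega_D)\widetilde{w} \le 0$ with $\widetilde{w}\le 0$ on the parabolic boundary, so $\phi^\epsilon \ge m$. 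Since the argument applies to each $\sigma \in B$ and every $t\ge 0$, every $\sigma' = \Phi(t,\sigma) \in O(B)$ satisfies $m \le \sigma' \le M$ on $[0,1]$.

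I do not anticipate a serious obstacle; the only delicate bookkeeping is to verify that the ``sign-compatible'' form of the zeroth-order term ($+\epsilon(\Omega_A+\Omega_D)w$) comes out automatically from the Langmuir reaction, and that the constants $M,m$ are chosen large/small enough to absorb both the boundary values (which force $M\ge 1$ and $m\le 0$) and the sign condition on $F$ (which forces $M \ge \Omega_A/(\Omega_A+\Omega_D)$ and $m \le \Omega_A/(\Omega_A+\Omega_D)$). Both of these are guaranteed by the choices $M=\max(M_u,1)$ and $m=\min(M_l,0)$, which is precisely why the statement of the lemma inflates $M_u,M_l$ against $1$ and $0$ respectively.
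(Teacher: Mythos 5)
Your proposal is correct and follows essentially the same route as the paper: subtract the constant barriers $\max(M_u,1)$ and $\min(M_l,0)$, derive a linear parabolic inequality whose zeroth-order coefficient $\epsilon(\Omega_A+\Omega_D)=\epsilon(K+1)\Omega_D$ is strictly positive, and conclude by the weak maximum principle (the paper merely inlines that principle as a contradiction argument at a positive interior maximum, rather than citing it). The only blemish is an immaterial sign slip in the drift term of your $\widetilde{w}$-inequality, which is harmless since the first-order coefficient plays no role at the maximum point.
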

\begin{proof}
$\phi^u\coloneqq\phi^{\epsilon,\sigma}-\max(M_u,1)$ satisfies
\begin{eqnarray}\label{differenceparabolicformrhoum}
\left\{\begin{array}{ll}
\phi^u_t\le\frac{\epsilon^2}{2}\phi^u_{xx}+\epsilon[2\phi^u+2\max(M_u,1)-1]\phi^u_x-\epsilon(K+1)\Omega_D\phi^u, & t>0,\\
 & 0<x<1,\\
\phi^u(0,t)\le0,\ \phi^u(1,t)\le0, & t>0, \\
\phi^u(x,0)\le 0, & 0\le x\le 1.
\end{array}\right.
\end{eqnarray}
$\forall T>0$, prove by contradiction that $\forall (x,t)\in[0,1]\times[0,T]$, $\phi^u=\phi^{\epsilon,\sigma}-\max(M_u,1)\le 0$. Otherwise, $(x_0,t_0)\in\argmax_{(x,t)\in[0,1]\times [0,T]}\phi^u$ satisfies $\phi^u(x_0,t_0)>0$. Since $\phi^u\le 0$ on the parabolic boundary $\set{0}\times[0,T]\cup\set{1}\times[0,T]\cup[0,1]\times\set{0}$, we have $(x_0,t_0)\in (0,1)\times (0,T]$. Therefore, $\phi^u_{xx}(x_0,t_0)\le 0$, $\phi^u_x(x_0,t_0)=0$, $\phi^u_t(x_0,t_0)\ge 0$. By Eq. \eqref{differenceparabolicformrhoum},
\begin{eqnarray*}
&&0\le\phi^u_t(x_0,t_0)\\
&&\le\qty{\frac{\epsilon^2}{2}\phi^u_{xx}+\epsilon[2\phi^u+2\max(M_u,1)-1]\phi^u_x-\epsilon(K+1)\Omega_D\phi^u}(x_0,t_0)<0,
\end{eqnarray*}
conflicts. Similarly, $\forall (x,t)\in[0,1]\times[0,T]$, $\phi^l\coloneqq\phi^{\epsilon,\sigma}-\min(M_l,0)\ge 0$. Since $\sigma\in B$ and $T>0$ are arbitrary, the proof is completed.
\end{proof}

\begin{lemma}\label{semiflowcompactness}
$\forall t_0>0$, the mapping $\Phi(t_0,\sigma):X\mapsto X$ is compact.
\end{lemma}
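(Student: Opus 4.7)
The plan is to verify that for any bounded set $B \subset X$, the image $\Phi(t_0, B)$ is relatively compact in $X$ with the sup-norm topology. By the Arzel\`a--Ascoli theorem this reduces to proving uniform boundedness together with equicontinuity of $\Phi(t_0, B)$ in $C[0,1]$.

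First I would translate to the zero-boundary equation by setting $g_0(x) := \sigma(x) - \rho^\epsilon(x)$, so that $\tilde{B} := \{\sigma - \rho^\epsilon : \sigma \in B\}$ is a bounded subset of $C_0[0,1]$ whenever $B$ is bounded in $X$. Applying Lemma \ref{boundorbit} to $B$ gives uniform upper and lower pointwise bounds on $\phi^\epsilon(\cdot, t; \sigma)$ for all $t \ge 0$ and all $\sigma \in B$, hence in particular on $g(\cdot, t; g_0)$ for $g_0 \in \tilde{B}$. This already yields uniform boundedness of $\Phi(t_0, B)$ in $X$.

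Next, for equicontinuity, I would upgrade this $L^\infty$ control to a uniform $C^1$ bound at time $t_0$. The key ingredient is the smoothing estimate underlying Theorem \ref{GlobalExsitenceUniquenessTheoofg}: the continuity of $\Phi_0(\tau, \cdot): C_0[0,1] \to C_0[0,1] \cap C^1[0,1]$ follows from estimate (7.1.18) of Theorem 7.1.5 in \cite{Lunardi1995}, which is quantitative and, through the variation-of-constants formula associated with $\mathcal{A}$, produces a bound of the form
\begin{equation*}
\|g_x(\cdot, t_0)\|_\infty \le C(t_0)\left(\|g_0\|_\infty + \sup_{s \in [0,t_0]} \|\varphi(g(\cdot,s))\|_\infty\right).
\end{equation*}
Since the orbit $\{g(\cdot, t; g_0) : g_0 \in \tilde{B},\, t \in [0, t_0]\}$ sits inside a single $L^\infty$ ball by the previous step, the quadratic nonlinearity $\varphi(g) = \epsilon g^2$ is uniformly bounded along it. This yields a constant $M = M(t_0, B)$ with $\|\Phi(t_0, \sigma)\|_{C^1[0,1]} \le M$ for every $\sigma \in B$.

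A uniform $C^1$ bound implies uniform Lipschitz continuity, hence equicontinuity of $\Phi(t_0, B)$ in $C[0,1]$; combined with the uniform sup-norm bound, Arzel\`a--Ascoli gives relative compactness in $C[0,1]$, and since the boundary values $\alpha$ and $1-\beta$ are preserved along the flow they are preserved under uniform limits, so the closure lies in $X$. The main delicate point is checking uniformity of the Lunardi smoothing estimate across initial data in $\tilde{B}$: the nonlinearity $\varphi$ is only locally Lipschitz, so the estimate is not globally uniform a priori. This is precisely where the a priori confinement from Lemma \ref{boundorbit} is crucial, since it lets us replace $\varphi$ by a truncation that is globally Lipschitz on the visited region without altering the dynamics, restoring the uniformity needed for the compactness conclusion.
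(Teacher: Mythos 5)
Your route is genuinely different from the paper's: the paper factors $\Phi(t_0,\cdot)$ through the space--time Sobolev space $W^{2,1}_p((0,1)\times(\tau^*,t_0))$, using the parabolic $L^p$ estimate (\textbf{Theorem 7.15} in \cite{Lieberman1996}) together with Sobolev interpolation, and extracts compactness from the embedding $W^{1,p}\hookrightarrow C$ ($p>2$) on the two-dimensional cylinder; you instead work entirely in $C[0,1]$ with Arzel\`a--Ascoli and semigroup smoothing. Your reduction to uniform boundedness plus equicontinuity, the use of Lemma \ref{boundorbit}, the passage to the homogeneous problem, and the truncation device for the merely locally Lipschitz nonlinearity are all sound. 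However, the central quantitative step has a genuine gap.

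The estimate
\begin{equation*}
\|g_x(\cdot,t_0)\|_\infty \le C(t_0)\Bigl(\|g_0\|_\infty+\sup_{s\in[0,t_0]}\|\varphi(g(\cdot,s))\|_\infty\Bigr)
\end{equation*}
does not follow from the variation-of-constants formula in the one-step way you indicate, and the obstruction is not the uniformity issue you flag at the end. In the Duhamel term $\int_0^{t_0}e^{(t_0-s)\mathcal{A}}\partial_x[\varphi(g(\cdot,s))]\,ds$, bounding the $C^1$ norm of the integrand by $\|\varphi(g(\cdot,s))\|_\infty$ costs a factor of order $(t_0-s)^{-1}$ (one factor $(t_0-s)^{-1/2}$ for the outer derivative, one for the inner $\partial_x$), which is not integrable. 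Equivalently, at the PDE level: for a linear problem $g_t=\mathcal{A}g+\partial_x F$ with $F$ merely bounded, the solution at positive times is H\"older continuous in space for every exponent $\theta<1$ but in general need not be Lipschitz, so sup-norm control of the divergence-form forcing cannot by itself produce the uniform $C^1$ bound you want. Two standard repairs exist. (i) Bootstrap: first run the smoothing estimate into an intermediate H\"older space, where the singularity is $(t_0-s)^{-(1+\theta)/2}$ and hence integrable, obtaining a uniform $C^\theta$ bound on $\{g(\cdot,s):s\ge t_0/2,\ g_0\in\tilde B\}$; since $\varphi(g)=\epsilon g^2$ inherits this $C^\theta$ bound, a second pass with singularity $(t_0-s)^{-1+\theta/2}$ then yields the uniform $C^1$ bound. (ii) Simpler: abandon $C^1$ altogether --- Arzel\`a--Ascoli needs only equicontinuity, and the uniform $C^\theta$ bound from the first pass, combined with your uniform sup bound and the preservation of the boundary values $\alpha$ and $1-\beta$ under uniform limits, already finishes the proof. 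With either repair your argument closes; as written, the key inequality is asserted rather than proved, and its proposed derivation fails.
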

\begin{proof}
$\forall 0<\tau<\tau'<t_0$, $\forall p>2$, define the following mapping:
\begin{enumerate}
  \item\label{LIDPMFlow} $\Phi_1(\rho)=\Phi(\tau,\rho):X\mapsto \Phi_1(X)$.
  \item\label{LIDPMSolve} $\Phi_2(\rho)=\mathcal{T}_{\tau}\phi^{\epsilon,\rho}:\Phi_1(X)\mapsto W^{1,p}((0,1)\times (\tau',t_0))$. $\mathcal{T}_{\tau}\phi(x,t)\equiv\phi(x,t-\tau)$.
  \item\label{LIDPMCompact} $\Phi_3(\phi)=\phi:W^{1,p}((0,1)\times (\tau',t_0))\mapsto C([0,1]\times [\tau',t_0])$.
  \item\label{LIDPMContinue} $\Phi_4(\phi)=\phi(\cdot,t_0):C([0,1]\times [\tau',t_0])\mapsto C[0,1]$.
\end{enumerate}
$\Phi_1$ is bounded by Lemma \ref{boundorbit}. Now prove $\Phi_2$ is bounded. $\forall B\subset \Phi_1(X)$, $\forall \rho\in B$, $\phi=\mathcal{T}_{\tau}\phi^{\epsilon,\rho}-\rho^\epsilon$ uniquely solves
\begin{eqnarray}\label{differenceparabolicformtaubegin}
\left\{\begin{array}{ll}
\phi_t=\frac{\epsilon^2}{2}\phi_{xx}+\epsilon[2\phi+(2\rho^\epsilon-1)]\phi_x+\epsilon[2\rho^\epsilon_x-(K+1)\Omega_D]\phi, & t>\tau,\\
 & 0<x<1,\\
\phi(0,t)=0,\ \phi(1,t)=0, & t>\tau, \\
\phi(x,\tau)=\rho(x)-\rho^\epsilon(x), & 0\le x\le 1,
\end{array}\right.
\end{eqnarray}
in $C([0,1]\times [\tau,t_0])\cap C^{2,1}([0,1]\times (\tau,t_0])$. Because $\rho\in\Phi_1(X)$, we have $\exists \rho'\in X$, $\Phi(\tau,\rho')=\rho$. Thus, $\phi^{\epsilon,\rho'}-\rho^\epsilon$ solves Eq. \eqref{differenceparabolicformtaubegin} in $C^{2,1}([0,1]\times [\tau,t_0])$. By uniqueness,
\begin{eqnarray}\label{EqFromBeg}
\phi=\phi^{\epsilon,\rho'}-\rho^\epsilon\in C^{2,1}([0,1]\times [\tau,t_0]).
\end{eqnarray}
If $\exists M_1>0$, $\forall \rho\in B$, $\norm{\rho}_{C[0,1]}\le M_1$, then by Lemma \ref{boundorbit}, $\exists M_2>0$, $\forall \rho\in B$,
\begin{eqnarray}\label{EqC2Dbound}
\norm{\phi+\rho^\epsilon}_{C([0,1]\times [\tau,t_0])}\equiv\norm{\mathcal{T}_{\tau}\phi^{\epsilon,\rho}}_{C([0,1]\times [\tau,t_0])}=\norm{\phi^{\epsilon,\rho}}_{C([0,1]\times [0,t_0-\tau])}\le M_2,
\end{eqnarray}
thereby $\exists M_3>0$, $\forall \rho\in B$, $\norm{\phi}_{L^p([0,1]\times [\tau,t_0])}\le M_3$. Thus, by Eq. \eqref{EqC2Dbound}, $\exists M_4>0$, $\forall \rho\in B$,
\begin{eqnarray*}
\norm{\epsilon\qty[2\phi+(2\rho^\epsilon-1)]}_{C([0,1]\times [\tau,t_0])}+\norm{\epsilon[2\rho^\epsilon_x-(K+1)\Omega_D]}_{C([0,1]\times [\tau,t_0])}\le M_4.
\end{eqnarray*}
The coefficient of the highest order term is constant in Eq. \eqref{differenceparabolicformtaubegin}. By {\bf Theorem 7.15} in \cite{Lieberman1996}, $\exists M_5>0$, $\forall \rho\in B$,
\begin{eqnarray}\label{EqgCongxxgt}
\norm{\phi_{xx}}_{L^p((0,1)\times (\tau',t_0))}+\norm{\phi_t}_{L^p((0,1)\times (\tau',t_0))}\le M_5\norm{\phi}_{L^p((0,1)\times (\tau,t_0))}\le M_5M_3.
\end{eqnarray}

Prove the boundness of $\phi_x$ in $L^p((0,1)\times (\tau',t_0))$ by Sobolev interpolation theorem ({\bf Theorem 5.2} in \cite{AdamsFournier2003}). Because $\phi\in C^{2,1}([0,1]\times [\tau,t_0])$, we have $\forall \rho\in B$, $\forall t\in[\tau',t_0]$, $\phi(\cdot,t)\in W^{2,p}(0,1)$. Thus, $\exists M_6>0$, $\forall \rho\in B$, $\forall t\in[\tau',t_0]$,
\begin{eqnarray*}
&&\norm{\phi_x(\cdot,t)}_{L^p(0,1)}^p\le M_6\qty[\norm{\phi_{xx}(\cdot,t)}_{L^p(0,1)}+\norm{\phi(\cdot,t)}_{L^p(0,1)}]^p\\
&&\le 2^{p-1}M_6\qty[\norm{\phi_{xx}(\cdot,t)}_{L^p(0,1)}^p+\norm{\phi(\cdot,t)}_{L^p(0,1)}^p].
\end{eqnarray*}
Thus, by Eq \eqref{EqgCongxxgt}, $\forall\rho\in B$,
\begin{eqnarray*}
&&\norm{\phi_x}_{L^p((0,1)\times (\tau',t_0))}^p=\int_{\tau'}^{t_0}\norm{\phi_x(\cdot,t)}_{L^p(0,1)}^pdt\\
&&\le 2^{p-1}M_6\qty(\int_{\tau'}^{t_0}\norm{\phi_{xx}(\cdot,t)}_{L^p(0,1)}^pdt+\int_{\tau'}^{t_0}\norm{\phi(\cdot,t)}_{L^p(0,1)}^pdt)\\
&&=2^{p-1}M_6\qty[\norm{\phi_{xx}}^p_{L^p((0,1)\times (\tau',t_0))}+\norm{\phi}^p_{L^p((0,1)\times (\tau',t_0))}]\le 2^{p-1}M_3^p(M_5^p+1)M_6.
\end{eqnarray*}
Consequently, $\exists M_7>0$, $\forall \rho\in B$, 
\begin{eqnarray*}
\norm{\mathcal{T}_{\tau}\phi^{\epsilon,\rho}}_{W^{1,p}((0,1)\times (\tau',t_0))}\le\norm{\phi}_{W^{1,p}((0,1)\times (\tau',t_0))}+\norm{\rho^\epsilon}_{W^{1,p}((0,1)\times (\tau',t_0))}\le M_7,
\end{eqnarray*}
thereby $\Phi_2$ is bounded.

Since $p>2$, by Sobolev compact imbedding theorem ({\bf Theorem 6.3} in \cite{AdamsFournier2003}), $\Phi_3$ is compact. The restriction mapping $\Phi_4$ is obviously continuous. By Eq. \eqref{EqFromBeg}, $\Phi_4\circ\Phi_3\circ\Phi_2\circ\Phi_1(\sigma)=\Phi(t_0,\sigma)$, so $\Phi(t_0,\sigma)$ is compact.
\end{proof}

\begin{lemma}\label{semiflowmonotonicity}
$\forall \sigma_1,\sigma_2\in X$, if $\forall x\in[0,1]$, $\sigma_1\ge \sigma_2$, then $\forall T>0$, $\forall x\in[0,1]$, $\Phi(T,\sigma_1)\ge \Phi(T,\sigma_2)$ (monotonicity).
\end{lemma}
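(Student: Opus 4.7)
The plan is to prove $w(x,t) := \phi^\epsilon(x,t;\sigma_1) - \phi^\epsilon(x,t;\sigma_2) \ge 0$ via a parabolic maximum principle for $w$. Subtracting the two copies of (\ref{continuumlimitintroduction}) and using the identity $(2\phi_1-1)\phi_{1,x}-(2\phi_2-1)\phi_{2,x} = (2\phi_1-1)w_x + 2\phi_{2,x}w$, one sees that $w$ solves the linear parabolic equation
\begin{equation*}
w_t = \tfrac{\epsilon^2}{2} w_{xx} + \epsilon(2\phi_1-1) w_x + \epsilon[2\phi_{2,x}-(\Omega_A+\Omega_D)] w
\end{equation*}
on $(0,1) \times (0,\infty)$, with $w(0,\cdot) \equiv w(1,\cdot) \equiv 0$ and $w(\cdot,0) = \sigma_1-\sigma_2 \ge 0$; Corollary \ref{GlobalExsitenceUniquenessTheoofrho} supplies the regularity $w \in C([0,1] \times [0,\infty)) \cap C^{2,1}([0,1] \times (0,\infty))$.

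Unlike in Lemma \ref{boundorbit}, the zero-order coefficient $c(x,t) := \epsilon[2\phi_{2,x}-(\Omega_A+\Omega_D)]$ is not sign-definite, so a direct interior-extremum argument fails. My plan is to introduce a time-dependent exponential shift. Fix $T>0$ and set $v(x,t) := e^{-\Lambda(t)} w(x,t)$ with $\Lambda(t) := \lambda t + \int_0^t M(s)\,ds$, where $\lambda>0$ and $M(s) := \sup_{y\in[0,1]} |c(y,s)|$. A direct computation yields
\begin{equation*}
v_t = \tfrac{\epsilon^2}{2} v_{xx} + \epsilon(2\phi_1-1) v_x + [c(x,t)-\Lambda'(t)] v,
\end{equation*}
and by construction $c(x,t)-\Lambda'(t) = c(x,t) - \lambda - M(t) \le -\lambda$ everywhere. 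Since $\Lambda(0)=0$ and $e^{-\Lambda(t)}>0$, $v$ inherits from $w$ continuity on $[0,1]\times[0,T]$, vanishing on $\{0,1\}\times[0,T]$, and nonnegativity on $[0,1]\times\{0\}$.

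The standard contradiction argument now closes the proof: if $v$ attained a negative minimum at some $(x_0,t_0) \in [0,1]\times[0,T]$, the parabolic-boundary inequalities would force $(x_0,t_0) \in (0,1)\times(0,T]$, where $v_x(x_0,t_0) = 0$, $v_{xx}(x_0,t_0) \ge 0$, and $v_t(x_0,t_0) \le 0$. Plugging into the PDE for $v$ gives $v_t(x_0,t_0) \ge -\lambda\, v(x_0,t_0) > 0$, a contradiction. Hence $v \ge 0$, so $w \ge 0$, on $[0,1]\times[0,T]$; taking $T$ as the given time finishes the argument.

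The only delicate step is the definition of $\Lambda$: since $\phi^\epsilon_x$ need not be bounded as $t \to 0^+$, $M(t)$ may be singular at the origin. This is precisely where the refined regularity of Theorem \ref{GlobalExsitenceUniquenessTheoofg} is essential---the bound ``$t^{1/2} g_x$ bounded near $t=0$'', transferred to $\phi^\epsilon_x$ via $\phi^\epsilon = g + \rho^\epsilon$ with $\rho^\epsilon \in C^\infty[0,1]$, gives $M(s) = O(s^{-1/2})$. This singularity is integrable, so $\Lambda \in C[0,T]$ with $\Lambda(0)=0$, the multiplier $e^{-\Lambda(t)}$ is bounded and positive on $[0,T]$, and the maximum principle argument is not obstructed because the suspected negative minimum is necessarily attained in the interior $(0,1)\times(0,T]$ where every coefficient is classical and finite.
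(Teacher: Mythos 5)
Your proof is correct and follows essentially the same route as the paper: both form the difference $w=\phi^\epsilon(\cdot,\cdot;\sigma_1)-\phi^\epsilon(\cdot,\cdot;\sigma_2)$, derive the same linear parabolic equation, and neutralize the sign-indefinite, $O(t^{-1/2})$-singular zero-order coefficient with an exponential time weight (the paper uses $\exp(-4\epsilon Mt^{1/2})$ built from the bound $|t^{1/2}\phi^\epsilon_x|<M$; your $\exp(-\lambda t-\int_0^t M(s)\,ds)$ is the same device in slightly more abstract form) before running the interior negative-minimum contradiction. The integrability of the singularity via the $t^{1/2}\phi^\epsilon_x$ bound, which you flag as the delicate point, is exactly the ingredient the paper's choice of weight exploits.
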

\begin{proof}
$\forall T>0$, by Corollary \ref{GlobalExsitenceUniquenessTheoofrho}, $\exists M>0$, $\forall(x,t)\in[0,1]\times (0,T]$, $\abs{t^{1/2}\phi^{\epsilon,\sigma_2}_x}< M$. $\phi\coloneqq\phi^{\epsilon,\sigma_1}-\phi^{\epsilon,\sigma_2}$ satisfies
\begin{eqnarray*}
\left\{\begin{array}{ll}
\phi_t=\frac{\epsilon^2}{2}\phi_{xx}+\epsilon(2\phi^{\epsilon,\sigma_1}-1)\phi_x+\epsilon[2\phi^{\epsilon,\sigma_2}_x-(K+1)\Omega_D]\phi, & t>0,\ 0<x<1,\\
\phi(0,t)=0,\ \phi(1,t)=0, & t>0, \\
\phi(x,0)=\sigma_1(x)-\sigma_2(x)\ge 0, & 0\le x\le 1.
\end{array}\right.
\end{eqnarray*}
Prove by contradiction that $\forall (x,t)\in[0,1]\times[0,T]$, $\phi=\phi^{\epsilon,\sigma_1}-\phi^{\epsilon,\sigma_2}\ge 0$. Otherwise,
\begin{eqnarray*}
(x_0,t_0)\in\argmin_{(x,t)\in [0,1]\times [0,T]}\exp(-4\epsilon Mt^{1/2})\phi
\end{eqnarray*}
satisfies $\phi(x_0,t_0)<0$. Because $\phi\ge 0$ on the parabolic boundary $\set{0}\times[0,T]\cup\set{1}\times[0,T]\cup[0,1]\times\set{0}$, we have $(x_0,t_0)\in (0,1)\times (0,T]$. Thus, $\phi_x(x_0,t_0)=0$, and $\phi_{xx}(x_0,t_0)\ge0$. In summary,
\begin{eqnarray*}
&&0\ge\qty[\exp(-4\epsilon Mt^{1/2})\phi]_t(x_0,t_0)\\
&&=-2\epsilon Mt_0^{-1/2}\exp(-4\epsilon Mt_0^{1/2})\phi(x_0,t_0)+\exp(-4\epsilon Mt_0^{1/2})\phi_t(x_0,t_0)\\
&&=\exp(-4\epsilon Mt_0^{1/2})\bigg\{\frac{\epsilon^2}{2}\phi_{xx}+\epsilon(2\phi^{\epsilon,\sigma_1}-1)\phi_x\\
&&+\epsilon\qty[2t_0^{-1/2}\qty[t_0^{1/2}\phi^{\epsilon,\sigma_2}_x-M]-(K+1)\Omega_D]\phi\bigg\}(x_0,t_0)>0,
\end{eqnarray*}
conflicts. Since $T>0$ is arbitrary, the proof is completed.
\end{proof}

\begin{lemma}\label{semiflowSOP}
$\forall \sigma_1,\sigma_2\in X$, if $\forall x\in[0,1]$, $\sigma_1\ge \sigma_2$, and $\exists x_0\in[0,1]$, $\sigma_1(x_0)> \sigma_2(x_0)$, then $\forall t>0$, $\exists U,V\subset X$, $U$ and $V$ are open sets, $\sigma_1\in U$, $\sigma_2\in V$, $\forall \sigma_u\in U$, $\forall\sigma_v\in V$, $\forall x\in[0,1]$, $\Phi(t,\sigma_u)\ge \Phi(t,\sigma_v)$ (strong order preserving (SOP)).
\end{lemma}
\begin{proof}
Because $\sigma_1(x_0)> \sigma_2(x_0)$, by continuity, $\exists \tau>0$, $\phi^{\epsilon,\sigma_1}(x_0,\tau)> \phi^{\epsilon,\sigma_2}(x_0,\tau)$. Because $\forall x\in[0,1]$, $\sigma_1\ge \sigma_2$, by Lemma \ref{semiflowmonotonicity}, $\forall x\in[0,1]$, $\phi^{\epsilon,\sigma_1}(x,\tau)\ge \phi^{\epsilon,\sigma_2}(x,\tau)$. $\forall T>\tau$, $\exists \lambda>0$, $\lambda\ge \abs{\epsilon\qty[2(\phi^{\epsilon,\sigma_2}_x-(K+1)\Omega_D]}$ in $[0,1]\times [\tau,T]$ (boundness). Then $\phi=\exp(-\lambda t)\qty(\phi^{\epsilon,\sigma_1}-\phi^{\epsilon,\sigma_2})$ satisfies
\begin{eqnarray*}
\left\{\begin{array}{ll}
\phi_t=\frac{\epsilon^2}{2}\phi_{xx}+\epsilon(2\phi^{\epsilon,\sigma_1}-1)\phi_x+\{\epsilon[2\phi^{\epsilon,\sigma_2}_x-(K+1)\Omega_D]-\lambda\}\phi, & t>\tau,\\
 & 0<x<1,\\
\phi(0,t)=0,\ \phi(1,t)=0, & t>\tau, \\
\phi(x,\tau)\ge 0, & 0\le x\le 1.
\end{array}\right.
\end{eqnarray*}
Since in $[0,1]\times [\tau,T]$, $\epsilon(2\phi^{\epsilon,\sigma_1}-1)$ and $\epsilon[2\phi^{\epsilon,\sigma_2}_x-(K+1)\Omega_D]-\lambda$ are bounded and $\epsilon[2\phi^{\epsilon,\sigma_2}_x-(K+1)\Omega_D]-\lambda\le 0$, by strong maximum principle ({\bf Theorem 2.9} in \cite{Lieberman1996}), $\forall (x,t)\in(0,1)\times (\tau,T]$, $\phi>0$ because otherwise, $\exists(x_1,t_1)\in(0,1)\times (\tau,T]$, $\phi(x_1,t_1)=0$, thereby $\forall (x,t)\in[0,1]\times [\tau,t_1]$, $\phi=0$. Therefore,
\begin{eqnarray*}
\qty(\phi^{\epsilon,\sigma_1}-\phi^{\epsilon,\sigma_2})(x_0,\tau)=\exp(\lambda \tau)\phi(x_0,\tau)=0,
\end{eqnarray*}
conflicts. Because $\forall (x,t)\in\set{0,1}\times [\tau,T]$, $\phi=0$, by {\bf Lemma 2.6} in \cite{Lieberman1996}, $\forall t\in(\tau,T]$, $\phi_x(0,t)>0>\phi_x(1,t)$. Since $T>\tau>0$ are arbitrary, we have $\forall t>0$, $\forall x\in(0,1)$, $\phi^{\epsilon,\sigma_1}>\phi^{\epsilon,\sigma_2}$, $\phi^{\epsilon,\sigma_1}_x(0,t)>\phi^{\epsilon,\sigma_2}_x(0,t)$, $\phi^{\epsilon,\sigma_1}_x(1,t)<\phi^{\epsilon,\sigma_2}_x(1,t)$. Thus, $\exists U',V'\subset X\cap C^1[0,1]$, $U'$ and $V'$ are open sets, $\Phi(\sigma_1,t)\in U'$ and $\Phi(\sigma_2,t)$, $\forall \sigma'_u\in U'$, $\forall \sigma'_v\in V'$, $\forall x\in[0,1]$, $\sigma'_u\ge \sigma'_v$. By Corollary \ref{GlobalExsitenceUniquenessTheoofrho}, $\sigma\to \Phi(t,\sigma):X\to X\cap C^1[0,1]$ is continuous. Therefore, $\exists U,V\subset X$, $U$ and $V$ are open sets, $\sigma_1\in U$ and $\sigma_2\in V$, $\Phi(t,U)\subset U'$ and $\Phi(t,V)\subset V'$.
\end{proof}

Finally, we prove Theorem \ref{Globalattractivityofrho}.
\begin{proof}
By Lemma \ref{boundorbit}, semiflow $\Phi(t,\sigma)$ has bounded orbits for bounded initial set. By Lemma \ref{semiflowcompactness}, $\Phi(t,\sigma)$ is compact for fixed $t>0$. $\Phi(t,\sigma)$ is SOP by Lemma \ref{semiflowSOP}. By Theorem \ref{TheSteadySolu}, $\Phi(t,\sigma)$ has unique equilibrium $\rho^\epsilon$ in $C[0,1]$, so $\lim_{t\to\infty}\norm{\Phi(t,\sigma)-\rho^\epsilon}_{C[0,1]}=0$ by Theorem 3.1 in \cite{Smith1995}.
\end{proof}

\section{Uniqueness and $C^\infty[0,1]$ regularity of the $L^{\infty}(0,1)$ solution of Eq. \eqref{ellipticequationintroduction}}\label{uniqueness}

\begin{lemma}\label{C1uniqueness}
$C^1[0,1]$ solution of Eq. \eqref{ellipticequationintroduction}, if exists, is unique.
\end{lemma}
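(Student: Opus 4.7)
The plan is to put the operator $L$ into divergence form, derive a homogeneous linear divergence-form equation for the difference of two $C^1$ solutions, and conclude with a comparison principle. Using $(2\rho-1)\rho_x=(\rho^2-\rho)_x$, I would first rewrite
\begin{equation*}
L\rho = \Bigl[\tfrac{\epsilon}{2}\rho_x + \rho^2 - \rho\Bigr]_x + \Omega_A - (\Omega_A+\Omega_D)\rho.
\end{equation*}
Given two $C^1[0,1]$ solutions $\rho_1,\rho_2$ with the same Dirichlet data, I set $w:=\rho_1-\rho_2\in C^1[0,1]$, which has $w(0)=w(1)=0$. The algebraic identity $\rho_1^2-\rho_2^2-(\rho_1-\rho_2)=(\rho_1+\rho_2-1)w$ then turns the subtraction $L\rho_1-L\rho_2=0$ into the linear divergence-form equation
\begin{equation*}
\Bigl[\tfrac{\epsilon}{2}w_x+b(x)\,w\Bigr]_x-c\,w=0\quad\text{in }(0,1),\qquad w(0)=w(1)=0,
\end{equation*}
with $b(x):=\rho_1(x)+\rho_2(x)-1\in C^1[0,1]$ and $c:=\Omega_A+\Omega_D>0$.

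The next step is to invoke the comparison principle for divergence-form operators of Trudinger (\cite{TrudingerArchive1974}; see also Ch.~8 of \cite{Gilbarg2001}) to conclude $w\equiv 0$. The hypotheses to check are: uniformly positive principal coefficient $\epsilon/2$, continuous first-order coefficient $b$ (which is automatic since $\rho_i\in C^1[0,1]$), and the strictly negative zeroth-order coefficient $-c<0$, which is exactly the sign condition required in the divergence-form (as opposed to non-divergence) formulation. Since the problem is linear, it suffices to test against $w^+$ and $w^-$ separately, or equivalently to apply the comparison with the roles of $\rho_1$ and $\rho_2$ exchanged, to conclude both $w\le 0$ and $w\ge 0$.

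The step I expect to be the main obstacle is verifying the applicability of the cited comparison principle, because when the equation for $w$ is expanded into non-divergence form,
\begin{equation*}
\tfrac{\epsilon}{2}w_{xx}+b(x)w_x+[b_x(x)-c]\,w=0,
\end{equation*}
the zeroth-order coefficient $b_x-c=(\rho_1+\rho_2)_x-(\Omega_A+\Omega_D)$ is of indefinite sign, so the classical pointwise weak maximum principle (e.g.\ \cite{Gilbarg2001}, Thm.~3.5) does not apply directly. The divergence-form comparison principle circumvents this precisely because its hypotheses are placed on the flux-form coefficients. As a fallback, I would follow the exponential-weight strategy already used in Lemma~\ref{semiflowSOP}: substitute $w=e^{\lambda x}h$ for $\lambda$ chosen so that the resulting zeroth-order coefficient becomes non-positive on the compact interval $[0,1]$, and then apply the strong maximum principle (Thm.~2.9 of \cite{Lieberman1996}) together with Hopf's boundary lemma (Lem.~2.6 of \cite{Lieberman1996}), exactly as in Lemma~\ref{semiflowSOP}, to conclude $h\equiv 0$ and hence $w\equiv 0$.
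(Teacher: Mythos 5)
Your reduction to the linear divergence-form problem for $w$ is exactly the paper's starting point (the paper's $b=\int_0^1(2\rho^{\epsilon,t}-1)\,dt=\rho^{\epsilon,0}+\rho^{\epsilon,1}-1$ is your $b$), and Trudinger's comparison principle is also the paper's reference. However, your key claim --- that the divergence-form maximum principle needs only ellipticity, continuity of $b$, and the sign condition $-c<0$ on the flux-form zeroth-order coefficient --- misstates the hypothesis of the linear theory you cite. Theorem 8.1 in Ch.~8 of \cite{Gilbarg2001} requires condition (8.8): $\int_0^1(d\,v-b\,v_x)\,dx\le 0$ for all $v\ge0$, $v\in C_0^1(0,1)$; integrating by parts (legitimate here since $b\in C^1$ after bootstrapping), this is the pointwise condition $b_x\le c$, i.e.\ precisely the indefinite-sign quantity $b_x-c=(\rho_1+\rho_2)_x-(\Omega_A+\Omega_D)$ that you correctly flag as the obstacle. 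So the linear divergence structure does \emph{not} by itself circumvent the problem, and (8.8) cannot be verified, since $(\rho_1+\rho_2)_x$ may be large (of order $1/\epsilon$ near boundary layers or domain walls). The tool that does work --- and the one the paper uses --- is the \emph{quasilinear} comparison principle (\cite{TrudingerArchive1974}; Theorem 10.7 of \cite{Gilbarg2001}), whose hypotheses are that the flux $A(x,z,p)=\frac{\epsilon}{2}p+z^2-z$ is differentiable in $z$ and $B(z)=\Omega_A-(\Omega_A+\Omega_D)z$ is non-increasing in $z$, with no condition on derivatives of coefficients. The paper in fact reproduces that theorem's proof rather than citing it: test the weak formulation with $\varphi=g^+/(g^++\delta)$, deduce $\frac{\epsilon}{2}\int_0^1\left[\log\left(1+\frac{g^+}{\delta}\right)\right]_x^2dx\le\Lambda\int_0^1\left|\left[\log\left(1+\frac{g^+}{\delta}\right)\right]_x\right|dx$, apply H\"older and Poincar\'e to get a $\delta$-uniform bound on $\int_0^1\left[\log\left(1+\frac{g^+}{\delta}\right)\right]^2dx$, and let $\delta\to0^+$. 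To make your main route rigorous you must verify these quasilinear hypotheses (which hold), not the Chapter-8 ones (which fail).

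Your fallback has a genuine gap as well. With a constant weight $w=e^{\lambda x}h$, the transformed zeroth-order coefficient is $\frac{\epsilon}{2}\lambda^2+b\lambda+b_x-c$; at any point where $b=0$ and $b_x>c$, this is $\ge b_x-c>0$ for \emph{every} $\lambda$, and such points cannot be excluded a priori (both solutions could cross $1/2$ steeply at the same location, where $b=\rho_1+\rho_2-1=0$ and $b_x\sim1/\epsilon$). The analogy with Lemma \ref{semiflowSOP} does not transfer: there the weight acts in time, and $e^{-\lambda t}$ contributes the sign-definite amount $-\lambda$ to the zeroth-order coefficient, whereas a spatial weight $e^{\lambda x}$ contributes $\frac{\epsilon}{2}\lambda^2+b\lambda$, which is not sign-definite because $b$ changes sign. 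The fallback can be repaired with a non-constant weight: taking $w=\psi h$ with $\psi:=\exp\left(-\frac{2}{\epsilon}\int_0^xb(t)\,dt\right)$, the $b_x$ and $b^2$ terms cancel and the equation becomes $\frac{\epsilon}{2}h_{xx}-bh_x-ch=0$, whose zeroth-order coefficient is exactly $-c<0$; the classical maximum principle then forces $h\equiv0$, hence $w\equiv0$. (This uses $b\in C^1$, which is legitimate because any $C^1$ solution of Eq.~(\ref{ellipticequationintroduction}) bootstraps to $C^2$ through the equation --- a regularity fact your own expansion into non-divergence form already presupposes, and which the paper's proof, never differentiating $b$, avoids entirely.)
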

The proof follows Theorem 10.7 in \cite{Gilbarg2001}, which is a generalization of the classical linear maximum principle to the quasi-linear case.
\begin{proof}
Eq. \eqref{ellipticequationintroduction} is $[\mathcal{B}(\rho,\rho_x)]_x+\mathcal{C}(\rho)=0$ with $\mathcal{B}(\rho,\rho_x)\equiv\epsilon\rho_x/2+(\rho)^2-\rho$, and $\mathcal{C}(\rho)\equiv-(K+1)\Omega_D\rho+K\Omega_D$. Suppose both $\rho^{\epsilon,0}$ and $\rho^{\epsilon,1}$ are $C^1[0,1]$ solutions of Eq. \eqref{ellipticequationintroduction}, and define $g\coloneqq\rho^{\epsilon,1}-\rho^{\epsilon,0}$,
\begin{eqnarray*}
g^+(x)\coloneqq\left\{\begin{array}{ll}
g(x), & g(x)\ge0, \\
0, & g(x)<0.
\end{array}\right.
\end{eqnarray*}
Then $\forall \delta>0$, $\varphi\coloneqq\frac{g^+}{g^++\delta}\in W^{1,2}_0(0,1)$ satisfies
\begin{eqnarray*}
&&0=\int_0^1\{[\mathcal{B}(\rho^{\epsilon,1},\rho^{\epsilon,1}_x)-\mathcal{B}(\rho^{\epsilon,0},\rho^{\epsilon,0}_x)]\varphi_x-[\mathcal{C}(\rho^{\epsilon,1})-\mathcal{C}(\rho^{\epsilon,0})]\varphi\}dx\\
&&=\int_0^1\bigg\{\qty[\qty(\rho^{\epsilon,1}+\rho^{\epsilon,0}-1)g^++\frac{\epsilon}{2}g^+_x]\qty(\frac{g^+_x}{g^++\delta}-\frac{g^+_xg^+}{(g^++\delta)^2})\\
&&+(K+1)\Omega_Dg^+\frac{g^+}{g^++\delta}\bigg\}dx\\
&&\ge\int_0^1\qty{\qty(\rho^{\epsilon,1}+\rho^{\epsilon,0}-1)\qty[\log\qty(1+\frac{g^+}{\delta})]_x\frac{g^+}{g^++\delta}\delta+\frac{\epsilon}{2}\qty[\log\qty(1+\frac{g^+}{\delta})]_x^2\delta}dx\\
&&\ge\int_0^1\qty{-\abs{\rho^{\epsilon,1}+\rho^{\epsilon,0}-1}\abs{\qty[\log\qty(1+\frac{g^+}{\delta})]_x}\delta+\frac{\epsilon}{2}\qty[\log\qty(1+\frac{g^+}{\delta})]_x^2\delta}dx.
\end{eqnarray*}
Since $\rho^{\epsilon,0},\rho^{\epsilon,1}\in C^1[0,1]$, $\exists\Lambda>0$, $\abs{\rho^{\epsilon,1}+\rho^{\epsilon,0}-1}\le \Lambda$. Then by H\"{o}lder's inequality,
\begin{eqnarray*}
&&\frac{\epsilon}{2\Lambda}\int_0^1\qty[\log\qty(1+\frac{g^+}{\delta})]_x^2dx\le\int_0^1\qty\abs{\qty[\log\qty(1+\frac{g^+}{\delta})]_x}dx\\
&&\le \sqrt{\int_0^1\qty[\log\qty(1+\frac{g^+}{\delta})]_x^2dx}.
\end{eqnarray*}
Thus,
\begin{eqnarray*}
\int_0^1\qty[\log\qty(1+\frac{g^+}{\delta})]_x^2dx\le\qty(\frac{2\Lambda}{\epsilon})^2.
\end{eqnarray*}
By Poincar\'{e}'s inequality,
\begin{eqnarray*}
\int_0^1\qty[\log\qty(1+\frac{g^+}{\delta})]^2dx\le \qty(\frac{2\Lambda}{\epsilon\pi})^2.
\end{eqnarray*}
$g^+=0$ ($\rho^{\epsilon,1}-\rho^{\epsilon,0}=g\le 0$) because otherwise, $\lim_{\delta\to0^+}\int_0^1\qty[\log\qty(1+\frac{g^+}{\delta})]^2dx=+\infty$ since $g^+$ is continuous in $[0,1]$. By symmetry, $\rho^{\epsilon,0}-\rho^{\epsilon,1}\le 0$. In conclusion, $\rho^{\epsilon,0}=\rho^{\epsilon,1}$.
\end{proof}

\begin{lemma}\label{LemW12Cinftyregularity}
$W^{1,2}(0,1)$ solution of Eq. \eqref{ellipticequationintroduction} has $C^\infty[0,1]$ regularity.
\end{lemma}
\begin{proof}
By mathematical induction, assume $\rho^\epsilon$ is a $W^{n,2}(0,1)$ solution of Eq. \eqref{ellipticequationintroduction}. Since $\frac{\epsilon}{2}\rho^\epsilon_{xx}=-(2\rho^\epsilon-1)\rho^\epsilon_x-\Omega_A(1-\rho^\epsilon)+\Omega_D\rho^\epsilon$ and $\rho^\epsilon\rho^\epsilon_x\in W^{n-1,2}(0,1)$ (Theorem 7.4 in \cite{Gilbarg2001}), $\frac{\epsilon}{2}\rho^\epsilon_{xx}\in W^{n-1,2}(0,1)$, thereby $\rho^\epsilon \in W^{n+1,2}(0,1)$. Inductively, $\rho^\epsilon \in W^{k,2}(0,1)$ $\forall k>0$ if $\rho^\epsilon \in W^{1,2}(0,1)$, thereby $\rho^\epsilon \in C^\infty[0,1]$ (Section 7.7 in \cite{Gilbarg2001}).
\end{proof}

\begin{lemma}\label{LemLinftyCinftyregularity}
$L^{\infty}(0,1)$ solution of Eq. \eqref{ellipticequationintroduction} has $C^\infty[0,1]$ regularity.
\end{lemma}
\begin{proof}
By Theorems \ref{TheEqOmega} and \ref{TheGeOmega}, Eq. \eqref{ellipticequationintroduction} has $W^{1,2}(0,1)$ solution $\rho^{\epsilon,0}$, which is $C^\infty[0,1]$ by Lemma \ref{LemW12Cinftyregularity}. Let $\rho^\epsilon\in L^\infty(0,1)$ solve Eq. \eqref{ellipticequationintroduction}. Then $\rho=\rho^\epsilon-\rho^{\epsilon,0}\in L^{\infty}(0,1)$ satisfies
\begin{eqnarray*}
\left\{\begin{array}{ll}
0=\frac{\epsilon}{2}\rho_{xx}+[\rho^2+\rho(2\rho^{\epsilon,0}-1)]_x-(K+1)\Omega_D\rho, & 0<x<1,\\
\rho(0)=0,\ \rho(1)=0. &
\end{array}\right.
\end{eqnarray*}
$\rho^2+\rho(2\rho^{\epsilon,0}-1)\in L^{\infty}(0,1)\subset L^2(0,1)$, so $[\rho^2+\rho(2\rho^{\epsilon,0}-1)]_x\in W^{-1,2}(0,1)$. Then
\begin{eqnarray*}
\frac{\epsilon}{2}\rho_{xx}=-[\rho^2+\rho(2\rho^{\epsilon,0}-1)]_x+(K+1)\Omega_D\rho\in  W^{-1,2}(0,1).
\end{eqnarray*}
Therefore, $\rho\in  W_0^{1,2}(0,1)$. Thus, $\rho^\epsilon=\rho^{\epsilon,0}+\rho\in W^{1,2}(0,1)$. By Lemma \ref{LemW12Cinftyregularity}, $\rho^\epsilon\in C^\infty[0,1]$.
\end{proof}

\section{The method of upper and lower solution}

\begin{definition}
Define the function set $Y$ such that $\rho\in Y$ iff
\begin{enumerate}
\item $\rho\in C[0,1]$.

\item $\exists k>0$, $\exists 0=x_0<x_1<x_2<\cdots<x_k=1$, $\forall i\in[0,k-1]$, $\rho\in C^2[x_i,x_{i+1}]$.
\end{enumerate}
$\forall\epsilon>0$, define $\zeta^\epsilon\in C^\infty_0(-\infty,+\infty)$ as
\begin{eqnarray*}
\zeta^\epsilon(x)\coloneqq\left\{\begin{array}{ll}
\exp(\frac{1}{x^2-\epsilon^2})/\mathbb{Z}, & \abs{x}\le\epsilon,\\
0, & \abs{x}>\epsilon,
\end{array}\right.
\end{eqnarray*}
where
\begin{eqnarray*}
\mathbb{Z}=\int_{-\epsilon}^\epsilon\exp(\frac{1}{x^2-\epsilon^2})dx.
\end{eqnarray*}
\end{definition}

\begin{lemma}\label{upperlowersufficientcondition}
$\rho\in Y$ is a $W^{1,2}(0,1)$ upper (lower) solution of Eq. \eqref{ellipticequationintroduction} if
\begin{enumerate}
\item $\rho(0)\ge (\le) \alpha$, $\rho(1)\ge (\le) \overline{\beta}$.
\item $\forall i=[0,k-1]$, $\forall x\in[x_i,x_{i+1}]$, $\frac{\epsilon}{2}\rho_{xx}+(2\rho-1)\rho_x+\Omega_A(1-\rho)-\Omega_D\rho\le (\ge) 0$.
\item $\forall i=[1,k-1]$, $\rho_x(x_i^-)\ge (\le) \rho_x(x_i^+)$.
\end{enumerate}
\end{lemma}
\begin{proof}
Use integration by parts (see Definition 4.7 in \cite{Du2006} and Lemma 5.2 in \cite{Lam2016}).
\end{proof}

\begin{theorem}\label{upperlower2W12}
Let $m$ be the Lebesgue measure, $\delta,M>0$, $\hat{\rho}\in L^0(0,1)$. If $\exists\epsilon_0>0$, $\forall \epsilon<\epsilon_0$, $\exists\rho^{\epsilon,l},\rho^{\epsilon,u}\in Y$ satisfy the conditions of lower and upper solutions in Lemma \ref{upperlowersufficientcondition}, $\forall x\in(0,1)$, $\rho^{\epsilon,l}\le \rho^{\epsilon,u}$, $\varlimsup_{\epsilon\to 0}m\qty(\abs{\rho^{\epsilon,u/l}-\hat{\rho}}>\delta)\le M$, then $\forall \epsilon<\epsilon_0$, Eq. \eqref{ellipticequationintroduction} has a $W^{1,2}(0,1)$ solution $\rho^\epsilon$, $\varlimsup_{\epsilon\to 0}m\qty(\abs{\rho^\epsilon-\hat{\rho}}>\delta)\le 2M$.
\end{theorem}
\begin{proof}
$\forall x\in (0,1)$, $\forall\xi \in \mathbb{R}$, $\forall t\in[\rho^{\epsilon,l}(x),\rho^{\epsilon,u}(x)]\subset[m,M]$,
\begin{eqnarray*}
&&\abs{p(t,\xi)}\equiv\abs{-(2t-1)\xi-\Omega_A(1-t)+\Omega_Dt} \\
&&\le \max\qty(\abs{2\max_{x\in[0,1]}\rho^{\epsilon,u}-1},\abs{2\min_{x\in[0,1]}\rho^{\epsilon,l}-1})\abs{\xi}\\
&&+(\Omega_A+\Omega_D)\max\qty(\abs{\max_{x\in[0,1]}\rho^{\epsilon,u}},\abs{\min_{x\in[0,1]}\rho^{\epsilon,l}})+\Omega_A.
\end{eqnarray*}
Let $q=2$. $\mathcal{D}(\xi)\equiv\frac{\epsilon}{2}\xi$ is continuous, thereby satisfies the Caratheodory conditions. $\forall \xi\in\mathbb{R}$, $\abs{\mathcal{D}(\xi)}\le \frac{\epsilon}{2}\abs{\xi}^{q-1}$, $\mathcal{D}(\xi)\xi\ge \frac{\epsilon}{2}\abs{\xi}^q$, $\forall \xi'\neq \xi$, $\qty[\mathcal{D}(\xi)-\mathcal{D}(\xi')](\xi-\xi')=\frac{\epsilon}{2}(\xi-\xi')^2>0$. Because Eq. \eqref{ellipticequationintroduction} has the quasi-linear form
\begin{eqnarray*}
\left\{\begin{array}{ll}
-[\mathcal{D}(\rho_x)]_x+p(\rho,\rho_x)=0, & 0<x<1,\\
\rho(0)=\alpha,\quad \rho(1)=\overline{\beta}, &
\end{array}\right.
\end{eqnarray*}
by Theorem 4.9 in \cite{Du2006} and Lemma \ref{upperlowersufficientcondition}, $\forall \epsilon<\epsilon_0$, Eq. \eqref{ellipticequationintroduction} has a $W^{1,2}(0,1)$ solution $\rho^\epsilon$, $\forall x\in[0,1]$, $\rho^{\epsilon,l}\le\rho^\epsilon\le\rho^{\epsilon,u}$. Thus, $\abs{\rho^\epsilon-\hat{\rho}}>\delta\to \abs{\rho^{\epsilon,u}-\hat{\rho}}>\delta\lor\abs{\rho^{\epsilon,l}-\hat{\rho}}>\delta$. By subadditivity,
\begin{eqnarray*}
&&\varlimsup_{\epsilon\to 0}m\qty(\abs{\rho^\epsilon-\hat{\rho}}>\delta)\le \varlimsup_{\epsilon\to 0}\qty(m\qty(\abs{\rho^{\epsilon,u}-\hat{\rho}}>\delta)+m\qty(\abs{\rho^{\epsilon,l}-\hat{\rho}}>\delta))\\
&&\le\varlimsup_{\epsilon\to 0}m\qty(\abs{\rho^{\epsilon,u}-\hat{\rho}}>\delta)+\varlimsup_{\epsilon\to 0}m\qty(\abs{\rho^{\epsilon,l}-\hat{\rho}}>\delta)\le 2M.
\end{eqnarray*}
\end{proof}

\begin{lemma}\label{LemConInMea}
If $\forall\delta>0$, $\exists\epsilon(\delta)>0$, $\forall\epsilon<\epsilon(\delta)$, Eq. \eqref{ellipticequationintroduction} has a $w^{1,2}(0,1)$ solution $\rho^\epsilon$, $\varlimsup_{\epsilon\to 0}m\qty(\abs{\rho^\epsilon-\hat{\rho}}>\delta)\le\delta$, then $\exists\epsilon_0>0$, $\forall\epsilon<\epsilon_0$, Eq. \eqref{ellipticequationintroduction} has a $w^{1,2}(0,1)$ solution $\rho^\epsilon$, $\forall \delta>0$, $\lim_{\epsilon\to 0}m\qty(\abs{\rho^\epsilon-\hat{\rho}}>\delta)=0$, {\it i.e.} $\rho^\epsilon$ converges to $\hat{\rho}$ in (Lebesgue) measure.
\end{lemma}
\begin{proof}
$\forall n\ge 0$, define $\delta_n\coloneq 2^{-n}$. Let $\epsilon_0\coloneqq\epsilon(\delta_0)$. $\forall n\ge 1$, $\epsilon_n\coloneqq\min(\epsilon_{n-1}/2,\epsilon(\delta_{n-1}))$. By assumption, $\forall\epsilon'<\epsilon(\delta_n)$, Eq. \eqref{ellipticequationintroduction} has a $w^{1,2}(0,1)$ solution $\rho^{\epsilon',n}$,
\begin{eqnarray*}
\varlimsup_{\epsilon'\to 0}m\qty(\abs{\rho^{\epsilon',n}-\hat{\rho}}>\delta_n)\le\delta_n.
\end{eqnarray*}
$\forall\epsilon_{n+1}\le\epsilon<\epsilon_n\le\epsilon(\delta_n)$, let $\rho^\epsilon=\rho^{\epsilon,n}$. Then $\forall n\ge 0$, $\forall\delta\ge\delta_n$,
\begin{eqnarray*}
\varlimsup_{\epsilon\to 0}m\qty(\abs{\rho^\epsilon-\hat{\rho}}>\delta)\le\varlimsup_{\epsilon\to 0}m\qty(\abs{\rho^\epsilon-\hat{\rho}}>\delta_n)\le\delta_n.
\end{eqnarray*}
The result follows as $n\to+\infty$.
\end{proof}

\section{Phases of Eq. \eqref{ellipticequationintroduction} as $\epsilon\to 0$ for $\Omega_A=\Omega_D=\Omega$}\label{phasespesec}

By simulations, previous studies \cite{Parmeggiani2004,Zhang20101,Zhang2012} find that as $\epsilon\to 0$, the numerical solution of Eq. \eqref{ellipticequationintroduction} tends to certain phase $\hat{\rho}\in L^0(0,1)$ depending essentially on $\alpha$ and $\beta$ (boundary-induced phase transition). See \cite{Nishinari2005,Leduc2012} for experimental observations. In this part, we summary the phase $\hat{\rho}$ for $\Omega_A=\Omega_D=\Omega$. Now Eq. \eqref{ellipticequationintroduction} becomes
\begin{eqnarray*}
L\rho^\epsilon\equiv\frac{\epsilon}{2}\rho_{xx}+(2\rho-1)(\rho_x-\Omega)=0.
\end{eqnarray*}
By particle-hole symmetry in Section \ref{model}, assume $\alpha\ge \beta$. There are 5 phases depending on $\alpha$, $\beta$, $\Omega$.
\begin{enumerate}
\item If $\alpha>\beta$, $\beta+\Omega>\alpha$, $\alpha+\beta+\Omega<1$, then (Fig. \ref{special_phase}(a))
\begin{eqnarray*}
\hat{\rho}=\left\{\begin{array}{ll}
\Omega x+\alpha, & 0\le x\le \frac{\Omega+\beta-\alpha}{2\Omega}, \\
\overline{\beta}-\Omega+\Omega x, & \frac{\Omega+\beta-\alpha}{2\Omega}< x\le 1.
\end{array}\right.
\end{eqnarray*}

\item If $\alpha>\beta$, $\alpha<0.5$, $\alpha+\beta+\Omega>1$, then (Fig. \ref{special_phase}(b))
\begin{eqnarray*}
\hat{\rho}=\left\{\begin{array}{ll}
\Omega x+\alpha, & 0\le x< \frac{1-2\alpha}{2\Omega}, \\
0.5, & \frac{1-2\alpha}{2\Omega}\le x\le \frac{2\Omega+2\beta-1}{2\Omega}, \\
\overline{\beta}-\Omega+\Omega x, & \frac{2\Omega+2\beta-1}{2\Omega}< x\le 1.
\end{array}\right.
\end{eqnarray*}

\item If $\alpha>0.5>\beta>0.5-\Omega$, then (Fig. \ref{special_phase}(c))
\begin{eqnarray*}
\hat{\rho}=\left\{\begin{array}{ll}
\alpha, & x=0, \\
0.5, & 0<x\le \frac{2\Omega+2\beta-1}{2\Omega}, \\
\overline{\beta}-\Omega+\Omega x, & \frac{2\Omega+2\beta-1}{2\Omega}< x\le 1.
\end{array}\right.
\end{eqnarray*}

\item If $\alpha>\beta+\Omega$, $\beta<0.5-\Omega$, then (Fig. \ref{special_phase}(d,e))
\begin{eqnarray*}
\hat{\rho}=\left\{\begin{array}{ll}
\alpha, & x=0, \\
\overline{\beta}-\Omega+\Omega x, & 0< x\le 1.
\end{array}\right.
\end{eqnarray*}

\item If $\alpha>0.5$, $\beta>0.5$, then (Fig. \ref{special_phase}(f))
\begin{eqnarray*}
\hat{\rho}=\left\{\begin{array}{ll}
\alpha, & x=0, \\
0.5, & 0< x< 1, \\
\overline{\beta}, & x=1.
\end{array}\right.
\end{eqnarray*}
\end{enumerate}

\begin{figure}
  \centering
\includegraphics[scale=0.35]{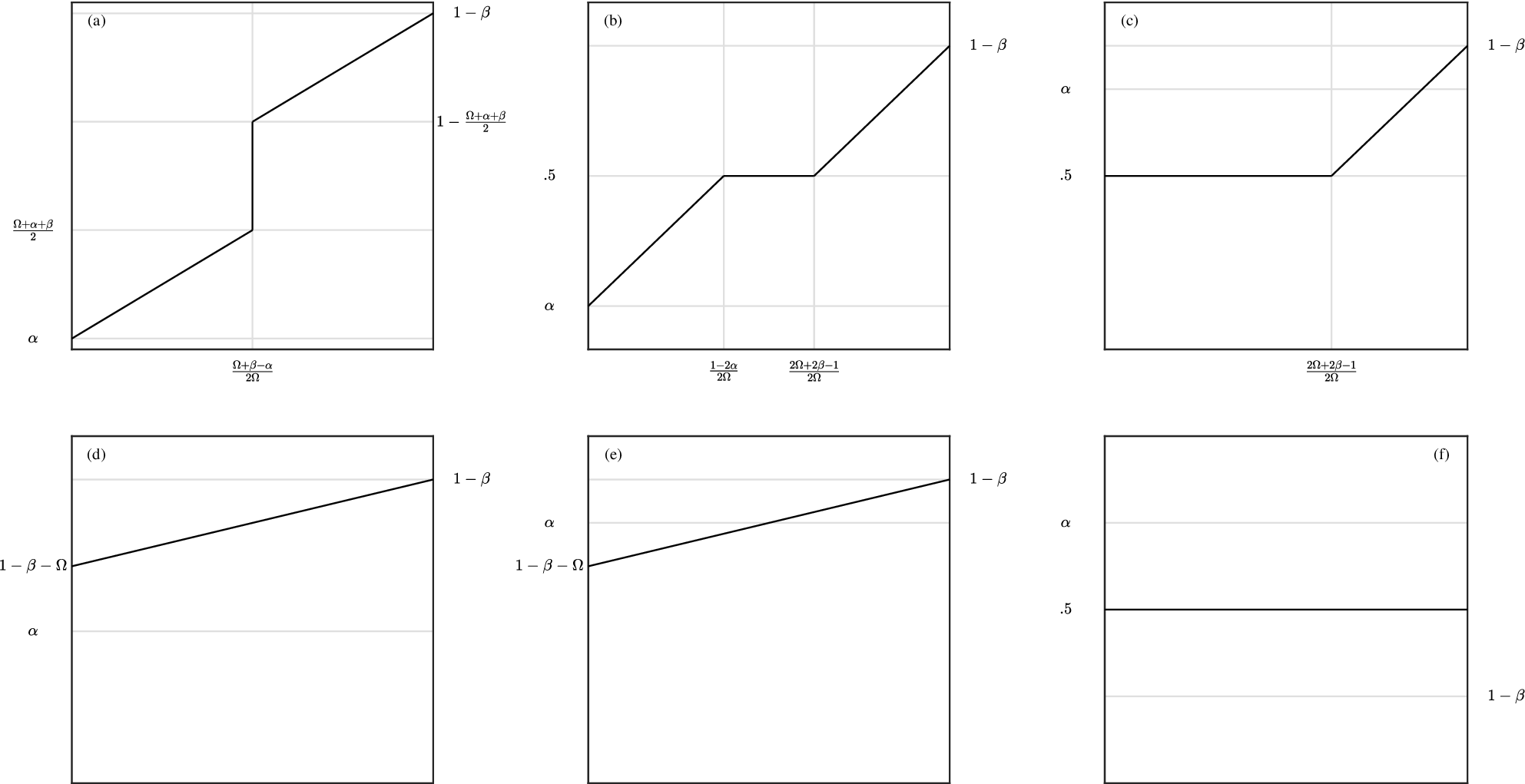}\\
\caption{Diagrams of $\hat{\rho}$ for $\Omega_D=\Omega_A=\Omega$. (a) $\Omega=5/8$, $\alpha=\beta=1/32$. (b) $\Omega=1$, $\alpha=\beta=1/8$. (c) $\Omega=1$, $\alpha=3/4$, $\beta=1/8$. (d) $\Omega=1/4$, $\alpha=7/16$, $\beta=1/8$. (e) $\Omega=1/4$, $\alpha=3/4$, $\beta=1/8$. (f) $\Omega=1/4$, $\alpha=3/4$, $\beta=3/4$.}\label{special_phase}
\end{figure}

Some phases may disappear for specific $\Omega$ values. In Fig. \ref{special_space}, we show four typical incomplete phase diagrams.

\begin{figure}
  \centering
\includegraphics[scale=0.35]{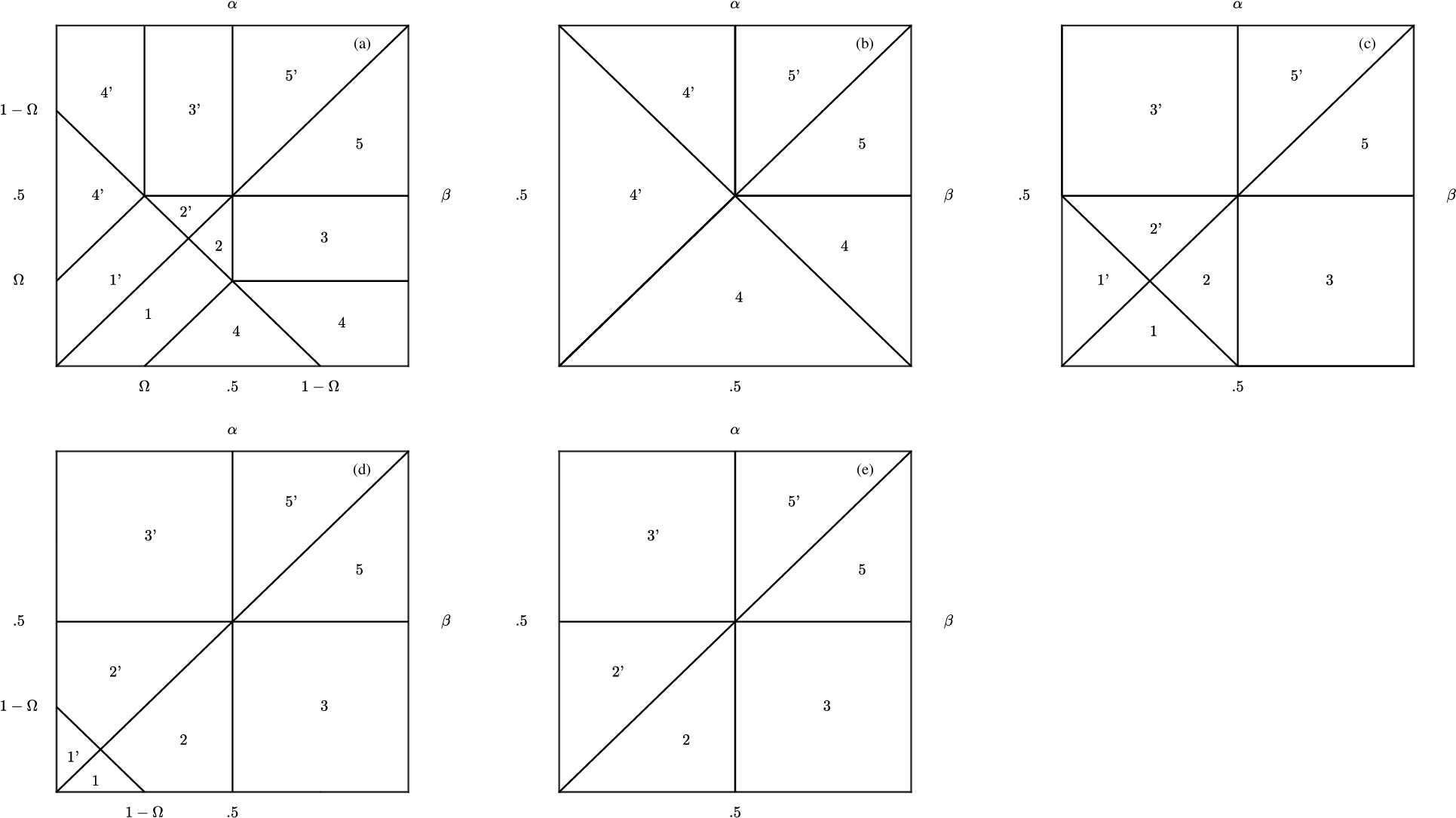}\\
\caption{Phase diagrams of $\hat{\rho}$ for $\Omega_D=\Omega_A=\Omega$. (a) $\Omega=0.25$. (b) $\Omega=0$. (c) $\Omega=0.5$. (d) $\Omega=0.75$. (e) $\Omega\ge 1$.}\label{special_space}
\end{figure}

\begin{theorem}\label{TheEqOmega}
If $\Omega_A=\Omega_D=\Omega$, then $\exists\epsilon_0>0$, $\forall\epsilon<\epsilon_0$, Eq. \eqref{ellipticequationintroduction} has a $w^{1,2}(0,1)$ solution $\rho^\epsilon$, which converges to $\hat{\rho}$ in (Lebesgue) measure.
\end{theorem}

\section{Proof of Theorem \ref{TheEqOmega}}\label{steadySpecialCases}

\subsection{Phase 1: $\alpha>\beta$, $\beta+\Omega>\alpha$, $\alpha+\beta+\Omega<1$}\label{spephase1seclab}

$\forall\delta>0$, define $x_d^u\coloneqq\frac{\Omega+\beta-\alpha}{2\Omega}-\frac{\delta}{2}$. Because
\begin{eqnarray*}
1-\beta=1-\frac{\Omega+\beta+\alpha}{2}+\Omega\qty(1-\frac{\Omega+\beta-\alpha}{2\Omega})<1-\frac{\Omega+\beta+\alpha}{2}+\Omega\qty(1-x_d^u),
\end{eqnarray*}
we have, $\exists 0<\Omega^u<\Omega$,
\begin{eqnarray*}
1-\frac{\Omega+\beta+\alpha}{2}+\Omega^u\qty(1-x_d^u)>1-\beta.
\end{eqnarray*}
Define $\rho^{\epsilon,u}\in Y$ as
\begin{eqnarray*}
\rho^{\epsilon,u}\coloneqq\left\{\begin{array}{ll}
w^\epsilon+\Omega\qty(x-x_d^u), & x\le x_d^u,\\
w^\epsilon+\Omega^u(x-x_d^u), & x>x_d^u,
\end{array}\right.
\end{eqnarray*}
where $x_d^u\coloneqq\frac{\Omega+\beta-\alpha}{2\Omega}-\frac{\delta}{2}$, $w^\epsilon$ solves
\begin{eqnarray*}
\left\{\begin{array}{l}
\frac{\epsilon}{2}w^\epsilon_x=-\qty(w^\epsilon-\frac{\Omega+\beta+\alpha}{2})\qty(w^\epsilon-\qty(1-\frac{\Omega+\beta+\alpha}{2})),\\
w^\epsilon\qty(x_d^u-\frac{\delta}{2})=0.5.
\end{array}\right.
\end{eqnarray*}
$\forall x\le x_d^u$,
\begin{eqnarray}\label{EqSpePhase1L1}
L\rho^{\epsilon,u}=2\Omega\qty(x-x_d^u)w^\epsilon_x\le 0.
\end{eqnarray}
Also,
\begin{eqnarray*}
&&\lim_{\epsilon\to 0}\max_{x\in\qty[x_d^u,1]}(2\rho^{\epsilon,u}-1)(\Omega^u-\Omega)\le \lim_{\epsilon\to 0}\qty[2\rho^{\epsilon,u}\qty(x_d^u)-1](\Omega^u-\Omega)\\
&&=\lim_{\epsilon\to 0}\qty[2w^\epsilon\qty(x_d^u)-1](\Omega^u-\Omega)=(1-\Omega-\alpha-\beta)(\Omega^u-\Omega)<0.
\end{eqnarray*}
Thus,
\begin{eqnarray}\label{EqSpePhase1L2}
&&\lim_{\epsilon\to 0}\max_{x\in\qty[x_d^u,1]}L\rho^{u,\epsilon}\le\lim_{\epsilon\to 0}\max_{x\in\qty[x_d^u,1]}2\Omega^u\qty(x-x_d^u)w^\epsilon_x\nonumber\\
&&+\lim_{\epsilon\to 0}\max_{x\in\qty[x_d^u,1]}(2\rho^{\epsilon,u}-1)(\Omega^u-\Omega)<0.
\end{eqnarray}
Also,
\begin{eqnarray}\label{EqSpePhase1CuspBound}
&&\rho^{\epsilon,u}_x\qty(\qty(x_d^u)^-)=w^\epsilon_x(x_d^u)+\Omega>w^\epsilon_x(x_d^u)+\Omega^u=\rho^{\epsilon,u}_x\qty(\qty(x_d^u)^+),\nonumber\\
&&\rho^{\epsilon,u}(0)=w^\epsilon(0)-\frac{\Omega+\beta+\alpha}{2}+\alpha+\frac{\Omega\delta}{2}>\alpha,\nonumber\\
&&\lim_{\epsilon\to 0}\rho^{\epsilon,u}(1)=\lim_{\epsilon\to 0}w^\epsilon(1)+\Omega^u\qty(1-x_d^u)\nonumber\\
&&=1-\frac{\Omega+\beta+\alpha}{2}+\Omega^u\qty(1-x_d^u)>1-\beta.
\end{eqnarray}
By Eqs. \eqref{EqSpePhase1L1}, \eqref{EqSpePhase1L2}, \eqref{EqSpePhase1CuspBound}, $\exists\epsilon^u_0>0$, $\forall\epsilon<\epsilon^u_0$, $\rho^{\epsilon,u}$ satisfies the conditions of upper solution in Lemma \ref{upperlowersufficientcondition}.

Define
\begin{eqnarray*}
&&w\coloneqq\left\{\begin{array}{ll}
\frac{\Omega+\beta+\alpha}{2}, & x\le x_d^u-\frac{\delta}{2},\\
1-\frac{\Omega+\beta+\alpha}{2}, & x>x_d^u-\frac{\delta}{2},
\end{array}\right.\\
&&\rho^u\coloneqq\left\{\begin{array}{ll}
w+\Omega(x-x_d^u), & x\le x_d^u,\\
w+\Omega^u(x-x_d^u), & x> x_d^u.
\end{array}\right.
\end{eqnarray*}
Then
\begin{eqnarray*}
&&\varlimsup_{\epsilon\to 0}m\qty(\abs{\rho^{\epsilon,u}-\hat{\rho}}>\qty(1+\frac{\Omega}{2})\delta)\\
&&\le\varlimsup_{\epsilon\to 0}\qty(m\qty(\abs{\rho^{\epsilon,u}-\rho^u}>\delta)+m\qty(\abs{\rho^u-\hat{\rho}}>\frac{\Omega\delta}{2}))\\
&&=\varlimsup_{\epsilon\to 0}m\qty(\abs{w^\epsilon-w}>\delta)+m\qty(\abs{\rho^u-\hat{\rho}}>\frac{\Omega\delta}{2})\le\delta.
\end{eqnarray*}

$\forall\delta>0$, define $\rho^{\epsilon,l}$ by symmetry. Because $\rho^{\epsilon,l}<\rho^{\epsilon,u}$, by Theorem \ref{upperlower2W12}, $\forall \epsilon<\min(\epsilon_0^u,\epsilon_0^l)$, Eq. \eqref{ellipticequationintroduction} has a $W^{1,2}(0,1)$ solution $\rho^\epsilon$,
\begin{eqnarray*}
\varlimsup_{\epsilon\to 0}m\qty(\abs{\rho^\epsilon-\hat{\rho}}>\qty(1+\frac{\Omega}{2})\delta)\le 2\delta.
\end{eqnarray*}
Theorem \ref{TheEqOmega} follows Lemma \ref{LemConInMea}.

\subsection{Phase 2: $\alpha>\beta$, $\alpha<0.5$, $\alpha+\beta+\Omega>1$}\label{spephase2seclab}
$\forall\delta>0$, because
\begin{eqnarray*}
1-\beta<1-\beta+\delta=0.5+\delta+\Omega\frac{0.5-\beta}{\Omega},
\end{eqnarray*}
we have $\exists \Omega^u<\Omega$,
\begin{eqnarray*}
0.5+\delta+\Omega^u\frac{0.5-\beta}{\Omega}>1-\beta.
\end{eqnarray*}
Define $\rho^{\epsilon,u}\in Y$ as
\begin{eqnarray*}
\rho^{\epsilon,u}\coloneqq\left\{\begin{array}{ll}
0.5+\delta+\Omega x-0.5+\alpha, & 0\le x\le x_0, \\
f*\zeta^\epsilon, & x_0< x\le 1,
\end{array}\right.
\end{eqnarray*}
where $x_0\coloneqq\frac{0.5-\alpha}{\Omega}$,
\begin{eqnarray*}
f(x)\coloneqq\left\{\begin{array}{ll}
0.5+\delta, & x\le 1-\frac{0.5-\beta}{\Omega},\\
0.5+\delta+\Omega^u \qty(x-1+\frac{0.5-\beta}{\Omega}), & x>1-\frac{0.5-\beta}{\Omega}.
\end{array}\right.
\end{eqnarray*}
$\forall 0\le x\le x_0$,
\begin{eqnarray}\label{EqSpePhase2L1}
L\rho^{\epsilon,u}=0
\end{eqnarray}
Because $f(x)$ is piece-wise linear, we have $\forall x\in\mathbb{R}$,
\begin{eqnarray*}
\qty(f*\zeta^\epsilon)_{xx}(x)=\qty(\int_{-\infty}^{+\infty}f(y+x)\zeta(y)dy)_{xx}=\int_{-\infty}^{+\infty}\qty(f(y+x))_{xx}\zeta(y)dy=0.
\end{eqnarray*}
Because $\forall x\in\mathbb{R}$, $\forall h>0$, $f(x)\ge0.5+\delta$, $f(x+h)-f(x)\le\Omega^uh$, we have
\begin{eqnarray*}
&&f*\zeta^\epsilon(x)=\int_{-\infty}^{+\infty}f(y+x)\zeta(y)dy\le0.5+\delta,\\
&&f*\zeta^\epsilon(x+h)-f*\zeta^\epsilon(x)=\int_{-\infty}^{+\infty}\qty[f(y+x+h)-f(y+x)]\zeta(y)dy\le \Omega^uh.
\end{eqnarray*}
Thus, $(f*\zeta^\epsilon)_x\le\Omega^u$. Therefore, $\forall x_0<x\le 1$.
\begin{eqnarray}\label{EqSpePhase2L2}
L\rho^{\epsilon,u}=\frac{\epsilon}{2} \qty(f*\zeta^\epsilon)_{xx}+(2f*\zeta^\epsilon-1)\qty(\qty(f*\zeta^\epsilon)_x-\Omega)\le 2\delta\qty(\Omega^u-\Omega)<0.
\end{eqnarray}
Also,
\begin{eqnarray}\label{EqSpePhase2CuspBound}
&&\rho^{\epsilon,u}_x(x_0^-)=\Omega>\Omega^u\ge(f*\zeta^\epsilon)_x(x_0)=\rho^{\epsilon,u}_x(x_0^+),\nonumber\\
&&\rho^{\epsilon,u}(0)=\delta+\alpha>\alpha,\nonumber\\
&&\lim_{\epsilon\to0}\rho^{\epsilon,u}(1)=\lim_{\epsilon\to0}f*\zeta^\epsilon(1)=f(1)=0.5+\delta+\Omega^u\frac{0.5-\beta}{\Omega}<1-\beta.
\end{eqnarray}
By Eqs. \eqref{EqSpePhase2L1}, \eqref{EqSpePhase2L2}, \eqref{EqSpePhase2CuspBound}, $\exists\epsilon^u_0>0$, $\forall\epsilon<\epsilon^u_0$, $\rho^{\epsilon,u}$ satisfies the conditions of upper solution in Lemma \ref{upperlowersufficientcondition}.

Define
\begin{eqnarray*}
\rho^u\coloneqq\left\{\begin{array}{ll}
0.5+\delta+\Omega x-0.5+\alpha, & 0\le x\le x_0, \\
f, & x_0< x\le 1.
\end{array}\right.
\end{eqnarray*}
Then
\begin{eqnarray*}
&&\varlimsup_{\epsilon\to 0}m\qty(\abs{\rho^{\epsilon,u}-\hat{\rho}}>2\delta)\le\varlimsup_{\epsilon\to 0}\qty(m\qty(\abs{\rho^{\epsilon,u}-\rho^u}>\delta)+m\qty(\abs{\rho^u-\hat{\rho}}>\delta))\\
&&=\varlimsup_{\epsilon\to 0}m\qty(\abs{\rho^{\epsilon,u}-\rho^u}>\delta)+m\qty(\abs{\rho^u-\hat{\rho}}>\delta)=0.
\end{eqnarray*}

$\forall\delta>0$, define $\rho^{\epsilon,l}$ by symmetry. Because $\rho^{\epsilon,l}<\rho^{\epsilon,u}$, by Theorem \ref{upperlower2W12}, $\forall \epsilon<\min(\epsilon_0^u,\epsilon_0^l)$, Eq. \eqref{ellipticequationintroduction} has a $W^{1,2}(0,1)$ solution $\rho^\epsilon$,
\begin{eqnarray*}
\varlimsup_{\epsilon\to 0}m\qty(\abs{\rho^\epsilon-\hat{\rho}}> 2\delta)=0.
\end{eqnarray*}
Theorem \ref{TheEqOmega} follows Lemma \ref{LemConInMea}.

\subsection{Phase 3: $\alpha>0.5$, $0.5-\Omega<\beta<0.5$}\label{spephase3seclab}
$\forall\delta>0$, because
\begin{eqnarray*}
1-\beta<1-\beta+\delta=0.5+\delta+\Omega\frac{0.5-\beta}{\Omega},
\end{eqnarray*}
we have $\exists 0<\Omega^u<\Omega$,
\begin{eqnarray*}
0.5+\delta+\Omega^u\frac{0.5-\beta}{\Omega}>1-\beta.
\end{eqnarray*}
Define $\rho^{\epsilon,u}\in Y$ as $\rho^{\epsilon,u}\coloneqq f^\epsilon*\zeta^\epsilon$, where $x_0\coloneqq 1-\frac{0.5-\beta}{\Omega}$,
\begin{eqnarray*}
f^\epsilon\coloneqq\left\{\begin{array}{ll}
w^\epsilon, & x\le x_0, \\
w^\epsilon(x_0)+\Omega^u (x-x_0), &  x>x_0,
\end{array}\right.
\end{eqnarray*}
$w^\epsilon$ solves
\begin{eqnarray*}
\left\{\begin{array}{l}
\frac{\epsilon}{2}w^\epsilon_x=-\qty(w^\epsilon-0.5-\delta)\qty(w^\epsilon-0.5+\delta),\\
w^\epsilon(0)=\alpha.
\end{array}\right.
\end{eqnarray*}
$\forall 0\le x\le x_0-\delta$, $\forall\epsilon<\delta$,
\begin{eqnarray*}
&&L\rho^{\epsilon,u}=\int_{-\infty}^{+\infty}Lf(y+x)\zeta^\epsilon(y)dy=\int_{-\infty}^{+\infty}Lw^\epsilon(y+x)\zeta^\epsilon(y)dy\\
&&=-\Omega\int_{-\infty}^{+\infty}\qty(2w^\epsilon(y+x)-1)\zeta^\epsilon(y)dy<-2\Omega\delta\int_{-\infty}^{+\infty}\zeta^\epsilon(y)dy=-2\Omega\delta<0.
\end{eqnarray*}
Because
\begin{eqnarray*}
&&\lim_{\epsilon\to0}\max_{x\ge x_0-2\delta}\frac{\epsilon}{2}f^\epsilon_{xx}=\lim_{\epsilon\to0}\max_{x\in\qty[x_0-2\delta,x_0]}\frac{\epsilon}{2}w^\epsilon_{xx}=-\lim_{\epsilon\to0}\max_{x\in\qty[x_0-2\delta,x_0]}(2w^\epsilon-1)w^\epsilon_x=0,\\
&&\max_{x\ge x_0-2\delta}f^\epsilon_x\le\Omega^u,\\
&&\min_{x\ge x_0-2\delta}f^\epsilon\ge 0.5+\delta,
\end{eqnarray*}
we have $\forall x_0-\delta<x\le 1$,
\begin{eqnarray}\label{EqSpePhase3L2}
&&\varlimsup_{\epsilon\to 0}L\rho^{\epsilon,u}=\varlimsup_{\epsilon\to 0}\int_{-\infty}^{+\infty}Lf^\epsilon(y+x)\zeta^\epsilon(y)dy\nonumber\\
&&=\varlimsup_{\epsilon\to 0}\int_{-\infty}^{+\infty}\qty(\frac{\epsilon}{2}f^\epsilon_{xx}+\qty(2f^\epsilon-1)\qty(f^\epsilon_x-\Omega))(y+x)\zeta^\epsilon(y)dy\nonumber\\
&&\le2\delta\qty(\Omega^u-\Omega)<0.
\end{eqnarray}
Also,
\begin{eqnarray}\label{EqSpePhase3Bound}
&&\lim_{\epsilon\to0}\rho^{\epsilon,u}(0)=\lim_{\epsilon\to0}f^\epsilon(0)=\lim_{\epsilon\to0}w^\epsilon(0)=\alpha,\nonumber\\
&&\lim_{\epsilon\to0}\rho^{\epsilon,u}(1)=\lim_{\epsilon\to0}f^\epsilon(1)=\lim_{\epsilon\to0}w^\epsilon(x_0)+\Omega^u\frac{0.5-\beta}{\Omega}\nonumber\\
&&=0.5+\delta+\Omega^u\frac{0.5-\beta}{\Omega}>1-\beta.
\end{eqnarray}
By Eqs. \eqref{EqSpePhase1L1}, \eqref{EqSpePhase3L2}, \eqref{EqSpePhase3Bound}, $\exists\epsilon_0>0$, $\forall\epsilon<\epsilon_0$, $\rho^{\epsilon,u}$ satisfies the conditions of upper solution in Lemma \ref{upperlowersufficientcondition}.

Define
\begin{eqnarray*}
\rho^u\coloneqq\left\{\begin{array}{ll}
\alpha, & x=0,\\
0.5+\delta, & 0<x\le x_0, \\
0.5+\delta+\Omega^u (x-x_0), &  x>x_0.
\end{array}\right.
\end{eqnarray*}
Then
\begin{eqnarray*}
\varlimsup_{\epsilon\to0}m\qty(\abs{\rho^{\epsilon,u}-\hat{\rho}}>2\delta)\le\varlimsup_{\epsilon\to0}\qty(m\qty(\abs{\rho^u-\hat{\rho}}>\delta)+m\qty(\abs{\rho^{\epsilon,u}-\rho^u}>\delta))=0.
\end{eqnarray*}

Define $\rho^l\in Y$ as
\begin{eqnarray*}
\rho^l\coloneqq\left\{\begin{array}{ll}
\alpha, & x=0, \\
0.5, & 0\le x\le x_0, \\
0.5+\Omega(x-x_0), & x_0<x\le 1.
\end{array}\right.
\end{eqnarray*}
Then
\begin{eqnarray*}
&&L\rho^l=0,\\
&&\rho^l_x(x_0^-)=0<\Omega=\rho^l_x(x_0^+),\\
&&\rho^l(0)=0.5<\alpha,\\
&&\rho^l(1)=1-\beta.
\end{eqnarray*}
Thus, $\forall\epsilon>0$, $\rho^l$ satisfies the conditions of lower solutions in Lemma \ref{upperlowersufficientcondition}. Because $m(\abs{\rho^l-\hat{\rho}}>0)=0$, $\rho^l<\rho^{\epsilon,u}$, by Theorem \ref{upperlower2W12}, we have $\forall\epsilon<\epsilon_0$, Eq. \eqref{ellipticequationintroduction} has a $W^{1,2}(0,1)$ solution $\rho^\epsilon$,
\begin{eqnarray*}
\varlimsup_{\epsilon\to 0}m\qty(\abs{\rho^\epsilon-\hat{\rho}}> 2\delta)=0.
\end{eqnarray*}
Theorem \ref{TheEqOmega} follows Lemma \ref{LemConInMea}.

\subsection{Phase 4: $\alpha>\beta+\Omega$, $0.5>\beta+\Omega$}\label{spephase4seclab}
Define $\rho^a,\rho^{\epsilon,b}\in Y$ as $\rho^a\coloneqq 1-\beta-\Omega+\Omega x$, $\rho^{\epsilon,b}=w^\epsilon+\Omega x$, where $w^\epsilon$ solves
\begin{eqnarray*}
\left\{\begin{array}{l}
\frac{\epsilon}{2}w^\epsilon_x=-\qty(w^\epsilon-(1-\beta-\Omega))\qty(w^\epsilon-\beta-\Omega),\\
w^\epsilon(0)=\alpha.
\end{array}\right.
\end{eqnarray*}
Then
\begin{eqnarray*}
&&L\rho^a=0,\\
&&\rho^a(1)=1-\beta,\\
&&\rho^{\epsilon,b}(0)=w^\epsilon(0)=\alpha.
\end{eqnarray*}

\subsubsection{$\alpha+\beta+\Omega<1$}
Because
\begin{eqnarray*}
&&\rho^a(0)=1-\beta-\Omega>\alpha,\\
&&\rho^{\epsilon,b}(1)=w^\epsilon(1)+\Omega=w^\epsilon(1)-1+\beta+\Omega+1-\beta<1-\beta,\\
&&L\rho^{\epsilon,b}=2\Omega xw^\epsilon_x\ge 0,
\end{eqnarray*}
we have $\forall\epsilon>0$, $\rho^a$ and $\rho^{\epsilon,b}$ satisfy the conditions of upper and lower solutions in Lemma \ref{upperlowersufficientcondition}.

Because $m\qty(\abs{\rho^a-\hat{\rho}}>0)=0$, $\forall\delta>0$, $\lim_{\epsilon\to 0}m\qty(\abs{\rho^{\epsilon,b}-\hat{\rho}}>\delta)=0$, $\rho^a>\rho^{\epsilon,b}$, by Theorem \ref{upperlower2W12}, we have $\forall\delta>0$, $\forall \epsilon>0$, Eq. \eqref{ellipticequationintroduction} has a $W^{1,2}(0,1)$ solution $\rho^\epsilon$,
\begin{eqnarray*}
\varlimsup_{\epsilon\to 0}m\qty(\abs{\rho^\epsilon-\hat{\rho}}> \delta)=0.
\end{eqnarray*}
Theorem \ref{TheEqOmega} follows Lemma \ref{LemConInMea}.

\subsubsection{$\alpha+\beta+\Omega>1$}

Because
\begin{eqnarray*}
&&\rho^a(0)=1-\beta-\Omega<\alpha,\\
&&\rho^{\epsilon,b}(1)=w^\epsilon(1)-1+\beta+\Omega+1-\beta>1-\beta,\\
&&L\rho^{\epsilon,b}=2\Omega xw^\epsilon_x\le0,
\end{eqnarray*}
we have $\forall\epsilon>0$, $\rho^a$ and $\rho^{\epsilon,b}$ satisfy the conditions of lower and upper solutions in Lemma \ref{upperlowersufficientcondition}. Theorem \ref{TheEqOmega} then follows Theorem \ref{upperlower2W12} and Lemma \ref{LemConInMea} similarly.

\subsection{Phase 5: $\alpha>0.5$, $\beta>0.5$}\label{spephase6seclab}
Define $\rho^{\epsilon,u}\in Y$ as $\rho^{\epsilon,u}\coloneqq w^\epsilon$, where $w^\epsilon$ solves
\begin{eqnarray*}
\left\{\begin{array}{l}
\frac{\epsilon}{2}w^\epsilon_x=-\qty(w^\epsilon-0.5)^2,\\
w^\epsilon(0)=\alpha.
\end{array}\right.
\end{eqnarray*}
Because
\begin{eqnarray*}
&&L\rho^{\epsilon,u}=-\Omega\qty(2w^\epsilon-1)<0,\\
&&\rho^{\epsilon,u}(0)=w^\epsilon(0)=\alpha,\\
&&\rho^{\epsilon,u}(1)=w^\epsilon(1)>0.5>1-\beta,
\end{eqnarray*}
we have $\forall\epsilon>0$, $\rho^{\epsilon,u}$ satisfies the conditions of upper solutions in Lemma \ref{upperlowersufficientcondition}. $\forall\delta>0$, $\lim_{\epsilon\to0}m\qty(\abs{\rho^{\epsilon,u}-\hat{\rho}}\ge\delta)=0$.

Define $\rho^{\epsilon,l}$ by symmetry. Because $\rho^{\epsilon,l}<\rho^{\epsilon,u}$, by Theorem \ref{upperlower2W12}, we have $\forall\delta>0$, $\forall \epsilon>0$, Eq. \eqref{ellipticequationintroduction} has a $W^{1,2}(0,1)$ solution $\rho^\epsilon$,
\begin{eqnarray*}
\varlimsup_{\epsilon\to 0}m\qty(\abs{\rho^\epsilon-\hat{\rho}}> \delta)=0.
\end{eqnarray*}
Theorem \ref{TheEqOmega} follows Lemma \ref{LemConInMea}.

\section{Phases of Eq. \eqref{ellipticequationintroduction} as $\epsilon\to 0$ for $\Omega_A/\Omega_D>1$}\label{phasegensec}
By particle-hole symmetry in Section \ref{model}, assume $K\equiv\Omega_A/\Omega_D>1$. Let $\epsilon=0$ in Eq. \eqref{ellipticequationintroduction}.
\begin{eqnarray}\label{Ldefinitiongenlim}
2(\rho-0.5)d\rho=(K+1)\Omega_D(\rho-\frac{K}{K+1})dx.
\end{eqnarray}
Curves of Eq. \eqref{Ldefinitiongenlim} is summarized in Fig. \ref{gene_phase}(a).
\begin{figure}
  \centering
  \includegraphics[width=14cm]{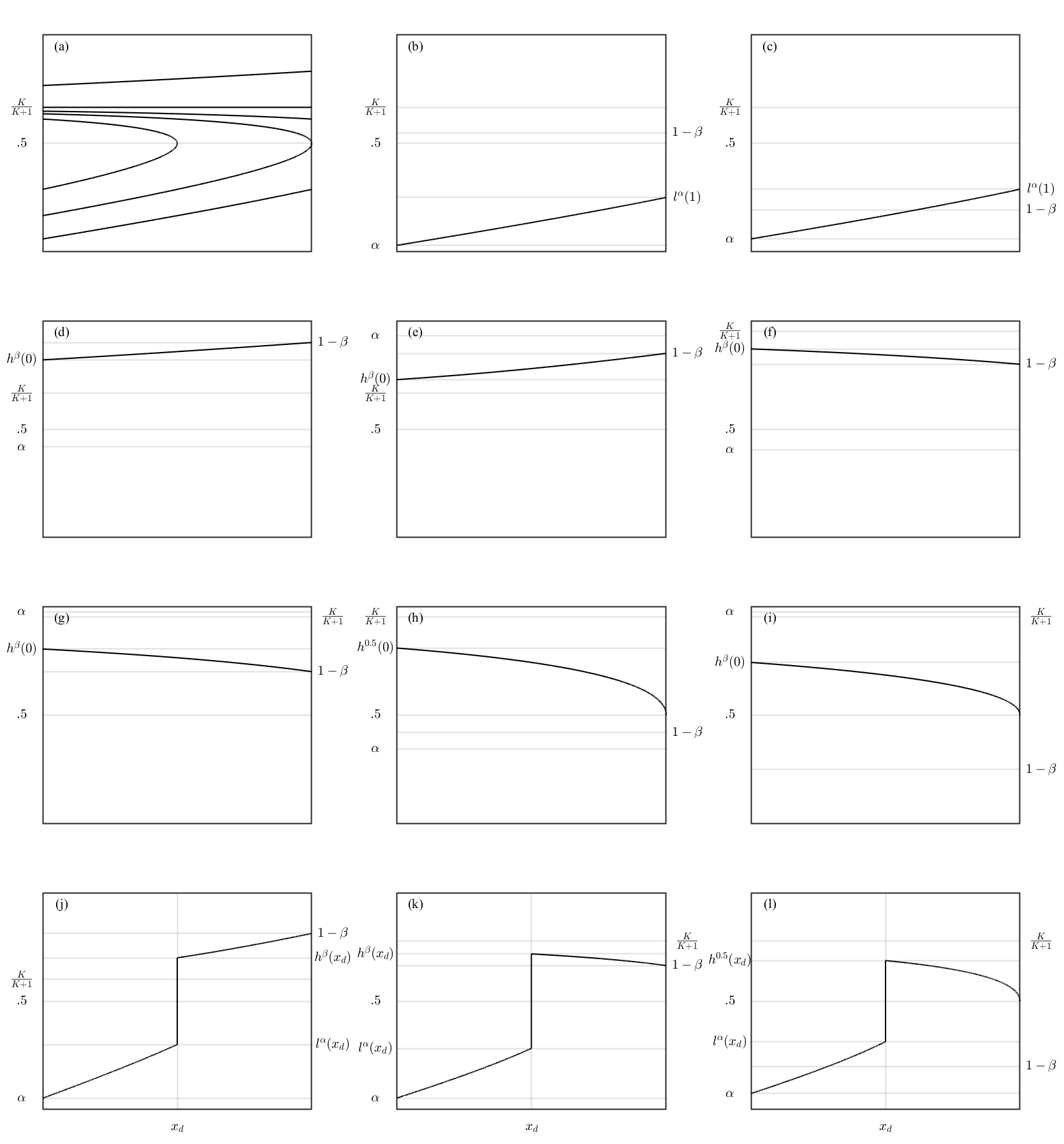}\\
  \caption{(a) Curves of Eq. \eqref{Ldefinitiongenlim}. (b-l) Diagrams of $\hat{\rho}$ for $K>1$. (b) $\Omega_D=0.1$, $K=2$, $\alpha=0.03$, $\beta=0.4502$. (c) $\Omega_D=0.1$, $K=2$, $\alpha=0.06$, $\beta=0.8074$. (d) $\Omega_D=0.1$, $K=2$, $\alpha=0.4200$, $\beta=0.1000$. (e) $\Omega_D=0.2$, $K=2$, $\alpha=0.9323$, $\beta=0.1500$. (f) $\Omega_D=0.02$, $K=20$, $\alpha=0.4074$, $\beta=0.2000$. (g) $\Omega_D=0.013$, $K=20$, $\alpha=0.9762$, $\beta=0.3000$. (h) $\Omega_D=0.02$, $K=20$, $\alpha=0.3457$, $\beta=0.5800$. (i) $\Omega_D=0.01$, $K=20$, $\alpha=0.9762$, $\beta=0.7500$. (j) $\Omega_D=0.3$, $K=1.5$, $x_d=0.5$. (k) $\Omega_D=0.11$, $K=3.5$, $x_d=0.5$. (l) $\Omega_D=0.11$, $K=3.5$, $x_d=0.5$.}\label{gene_phase}
\end{figure}

\begin{definition}
$\forall \alpha\in[0,0.5]$, let $l^\alpha$ be the part in $\rho\le 0.5$ of the curve passing $(0,\alpha)$ of Eq. \eqref{Ldefinitiongenlim}. $\forall \beta\in[0,0.5]$, let $h^\beta$ be the part in $\rho\ge 0.5$ of the curve passing $(1,1-\beta)$ of Eq. \eqref{Ldefinitiongenlim}; $g^\beta$ be the part in $\rho\le 0.5$ of the curve passing $(1,\beta)$ of Eq. \eqref{Ldefinitiongenlim}.
\end{definition}

\begin{lemma}\label{LemXdEU}
If $\beta\le 0.5$, $g^\beta(0)<\alpha<1-h^\beta(0)$, then $\exists x_d\in(0,1)$,
\begin{enumerate}
\item $l^\alpha(x_d)+h^\beta(x_d)=1$.
\item $\forall x\in[0,x_d)$, $l^\alpha+h^\beta<1$.
\item $\forall x\in(x_d,1]$, $l^\alpha+h^\beta>1$.
\end{enumerate}
\end{lemma}
\begin{proof}
By definition, $l^\alpha(0)+h^\beta(0)=\alpha+h^\beta(0)<1$. Prove by contradiction that $\exists x_d\in(0,1)$, $l^\alpha(x_d)+h^\beta(x_d)=1$. Otherwise, $\forall x\in[0,1)$, $l^\alpha+h^\beta< 1$, thereby $l^\alpha< 1-h^\beta\le 0.5$. Thus, $l^\alpha$ exists in $[0,1)$. Because $g^\beta(0)<\alpha$, we have $l^\alpha(1^-)+h^\beta(1^-)>\beta+1-\beta=1$. By continuity, $\exists x_d\in(0,1)$, $l^\alpha(x_d)+h^\beta(x_d)=1$, conflicts.

Now it is enough to prove that $\forall x\in[0,1]$, $l^\alpha+h^\beta=1$ implies $l_x^\alpha+h_x^\beta>0$.
\begin{enumerate}
\item If $\beta\le \frac{1}{K+1}$, then $h^\beta\ge\frac{K}{K+1}$. Because $l^\alpha< 0.5$, we have
\begin{eqnarray*}
&&l_x^\alpha+h_x^\beta\\
&&=\frac{(K+1)\Omega_D\qty(l^\alpha-\frac{K}{K+1})}{2(l^\alpha-0.5)}+\frac{(K+1)\Omega_D\qty(h^\beta-\frac{K}{K+1})}{2(h^\beta-0.5)}>0.
\end{eqnarray*}

\item If $\beta>\frac{1}{K+1}$, then $h^\beta<\frac{K}{K+1}$.
Because $0<0.5-l^\alpha=h^\beta-0.5$, we have
\begin{eqnarray*}
&&l_x^\alpha+h_x^\beta\\
&&=\frac{(K+1)\Omega_D\qty(l^\alpha-\frac{K}{K+1})}{2(l^\alpha-0.5)}+\frac{(K+1)\Omega_D\qty(h^\beta-\frac{K}{K+1})}{2(0.5-l^\alpha)}\\
&&=\frac{(K+1)\Omega_D\qty(h^\beta-l^\alpha)}{2(0.5-l^\alpha)}>0.
\end{eqnarray*}
\end{enumerate}
\end{proof}

\begin{lemma}\label{LemwPro}
$\forall A\in(-\infty,0.5]$, if
\begin{eqnarray*}
\left\{\begin{array}{l}
\frac{\epsilon}{2}w^\epsilon_x=-(w^\epsilon-A)(w^\epsilon-(1-A)),\\
w^\epsilon(x_0)=w_0,
\end{array}\right.
\end{eqnarray*}
then
\begin{enumerate}
\item $\frac{\epsilon}{2}w^\epsilon_{xx}+(2w^\epsilon-1)w^\epsilon_x=0$.
\item if $A\neq 0.5$, then
\begin{eqnarray*}
&&\log\abs{w^\epsilon-A}-\log\abs{w^\epsilon-(1-A)}+\frac{2}{\epsilon}(2A-1)x\\
&&=\log\abs{w_0-A}-\log\abs{w_0-(1-A)}+\frac{2}{\epsilon}(2A-1)x_0.
\end{eqnarray*}
\item if $x<x_0$ and $w_0<1-A$, then
\begin{eqnarray*}
&&\abs{w^\epsilon-A}\le\exp\bigg(\frac{2}{\epsilon}(2A-1)(x_0-x)+\log\abs{w_0-A}\\
&&-\log\abs{w_0-(1-A)}+\log(1-A-\min(A,w_0))\bigg).
\end{eqnarray*}
\item if $x>x_0$ and $w_0>A$, then
\begin{eqnarray*}
&&\abs{w^\epsilon-(1-A)}\le\exp\bigg(\frac{2}{\epsilon}(2A-1)(x-x_0)-\log\abs{w_0-A}\\
&&+\log\abs{w_0-(1-A)}+\log(\max(1-A,w_0)-A)\bigg).
\end{eqnarray*}
\end{enumerate}
\end{lemma}

There are 5 phases depending on $\alpha$, $\beta$, $\Omega_D$, $K$. 
\begin{enumerate}
\item If $\alpha<g^{0.5}(0)$, $\beta>l^\alpha(1)$, then (Fig. \ref{gene_phase}(a,b))
\begin{eqnarray*}
\hat{\rho}=\left\{\begin{array}{ll}
l^\alpha, & 0\le x< 1, \\
\overline{\beta}, & x=1.
\end{array}\right.
\end{eqnarray*}

\item If $\beta<0.5$, $1-h^\beta(0)<\alpha$, then (Fig. \ref{gene_phase}(c,d,e,f))
\begin{eqnarray*}
\hat{\rho}=\left\{\begin{array}{ll}
\alpha, & x=0, \\
h^\beta, & 0< x\le 1.
\end{array}\right.
\end{eqnarray*}

\item If $\beta>0.5$, $1-h^{0.5}(0)<\alpha$, then (Fig. \ref{gene_phase}(g,h))
\begin{eqnarray*}
\hat{\rho}=\left\{\begin{array}{ll}
\alpha, & x=0, \\
h^{0.5}, & 0< x< 1, \\
\overline{\beta}, & x=1.
\end{array}\right.
\end{eqnarray*}

\item If $\beta<0.5$, $g^\beta(0)<\alpha<1-h^\beta(0)$, then (Fig. \ref{gene_phase}(i,j))
\begin{eqnarray*}
\hat{\rho}=\left\{\begin{array}{ll}
l^\alpha, & 0\le x\le x_d, \\
h^\beta, & x_d< x\le 1.
\end{array}\right.
\end{eqnarray*}

\item If $\beta>0.5$, $g^{0.5}(0)<\alpha<1-h^{0.5}(0)$, then (Fig. \ref{gene_phase}(k))
\begin{eqnarray*}
\hat{\rho}=\left\{\begin{array}{ll}
l^\alpha, & 0\le x\le x_d, \\
h^{0.5}, & x_d< x\le 1.
\end{array}\right.
\end{eqnarray*}
\end{enumerate}

The phase diagram may change with $\Omega_A$ and $\Omega_D$. We show three typical phase diagrams in Fig. \ref{gene_space}.

\begin{figure}
  \centering
  \includegraphics[width=14cm]{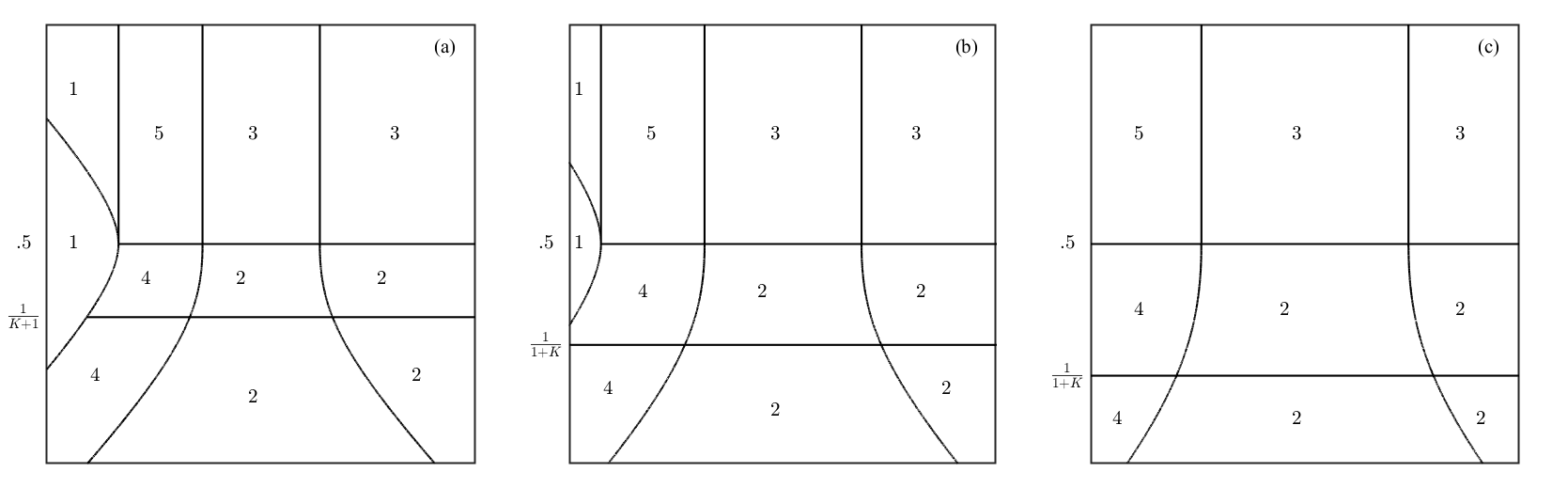}\\
  \caption{Phase diagrams of $\hat{\rho}$ for $K>1$. (a) $\Omega_D=0.1$, $K=2$. (b) $\Omega_D=0.1$, $K=2.7$. (c) $\Omega_D=0.1$, $K=4$.}\label{gene_space}
\end{figure}

\begin{theorem}\label{TheGeOmega}
If $\Omega_A/\Omega_D>1$, then $\exists\epsilon_0>0$, $\forall\epsilon<\epsilon_0$, Eq. \eqref{ellipticequationintroduction} has a $w^{1,2}(0,1)$ solution $\rho^\epsilon$, which converges to $\hat{\rho}$ in (Lebesgue) measure.
\end{theorem}

\section{Proof of Theorem \ref{TheGeOmega}}\label{steadyGeneralCases}
If
\begin{eqnarray*}
&&\rho_x=\frac{(K+1)\Omega_D(\rho-\frac{K}{K+1})}{2(\rho-0.5)},\\
&&\frac{\epsilon}{2}w^{\epsilon,1}_x=-(1-u)\qty(w^{\epsilon,1}-A_1)\qty(w^{\epsilon,1}-1+A_1),\\
&&\frac{\epsilon}{2}w^{\epsilon,2}_x=-\qty(w^{\epsilon,2}-A_2)\qty(w^{\epsilon,2}-1+A_2),
\end{eqnarray*}
then
\begin{eqnarray*}
&&L\qty(\rho+w^{\epsilon,1}+w^{\epsilon,2}-A_1-A_2-A_3)=\frac{\epsilon}{2}\rho_{xx}-A_3\Omega_D(K+1)\frac{0.5-\frac{K}{K+1}}{\rho-0.5}\\
&&+\Bigg[\Omega_D(K+1)\frac{0.5-\frac{K}{K+1}}{\rho-0.5}\\
&&-\frac{4}{\epsilon}\qty(\rho+w^{\epsilon,2}-A_1-A_2-A_3+u\qty(w^{\epsilon,1}-0.5))\qty(w^{\epsilon,1}-1+A_1)\Bigg]\qty(w^{\epsilon,1}-A_1)\\
&&+\Bigg[\Omega_D(K+1)\frac{0.5-\frac{K}{K+1}}{\rho-0.5}\\
&&-\frac{4}{\epsilon}\qty(\rho+w^{\epsilon,1}-A_1-A_2-A_3)\qty(w^{\epsilon,2}-1+A_2)\Bigg]\qty(w^{\epsilon,2}-A_2).
\end{eqnarray*}

\subsection{Phase 1: $\alpha<g^{0.5}(0)$, $\beta>l^\alpha(1)$}\label{genphase1seclab}

\subsubsection{$\beta<1-l^\alpha(1)$}

Let $u=0$, $0<\delta<\min\qty(\frac{\min(\beta,1-\beta)-l^\alpha(1)}{2},0.5-l^\alpha(1))$.

\paragraph{Upper solution}
Let $A_1=l^\alpha(1)+\delta$, $w^{\epsilon,1}(1)=1-\beta+\delta$. $\exists\delta_\alpha>0$, $\forall x\in[0,1]$, $l^{\alpha+\delta_\alpha}-l^\alpha<\delta$. $\exists A_3>0$, $\forall x\in[0,1]$, $l^{\alpha+\delta_\alpha}-A_3>l^\alpha$. Let $\rho^{\epsilon,u}=l^{\alpha+\delta_\alpha}+w^{\epsilon,1}-A_1-A_3$.

Because
\begin{eqnarray*}
&&\min_{x\in[0,1]}\qty(l^{\alpha+\delta_\alpha}-A_1-A_3)\qty(w^{\epsilon,1}-1+A_1)\\
&&\ge\qty(l^{\alpha+\delta_\alpha}(1)-l^\alpha(1)-\delta-A_3)\qty(l^\alpha(1)+2\delta-\beta)>0,
\end{eqnarray*}
we have
\begin{eqnarray*}
&&\varlimsup_{\epsilon\to0}\max_{x\in[0,1]}L\rho^{\epsilon,u}\le-A_3\Omega_D(K+1)\frac{0.5-\frac{K}{K+1}}{\alpha+\delta_\alpha-0.5}<0.
\end{eqnarray*}
Moreover,
\begin{eqnarray*}
&&\rho^{\epsilon,u}(0)=\alpha+\delta_\alpha-A_3+w^{\epsilon,1}(0)-A_1>\alpha,\\
&&\rho^{\epsilon,u}(1)=1-\beta-l^\alpha(1)+l^{\alpha+\delta_\alpha}-A_3>1-\beta,\\
&&\varlimsup_{\epsilon\to0}m\qty(\abs{\rho^{\epsilon,u}-\hat{\rho}}>2\delta)\\
&&\le m\qty(\abs{l^{\alpha+\delta_\alpha}-A_3-l^\alpha}>\delta)+\varlimsup_{\epsilon\to0}m\qty(\abs{w^{\epsilon,1}-A_1}>\delta)=0.
\end{eqnarray*}

\paragraph{Lower solution}
Let $\rho^{\epsilon,l}=l^\alpha$. Then
\begin{eqnarray*}
&&\min_{x\in[0,1]}L\rho^{\epsilon,l}=\frac{\epsilon}{2}\min_{x\in[0,1]}l^\alpha_{xx}>0,\\
&&\rho^{\epsilon,l}(0)=l^\alpha(0)=\alpha,\\
&&\rho^{\epsilon,l}(1)=l^\alpha(1)<1-\beta,\\
&&m\qty(\abs{\rho^{\epsilon,l}-\hat{\rho}}>0)=m\qty(\abs{l^\alpha-l^\alpha}>0)=0.
\end{eqnarray*}

\paragraph{Summary}
Because $\rho^{\epsilon,l}<\rho^{\epsilon,u}$, by Theorem \ref{upperlower2W12}, we have, $\exists\epsilon_0>0$, $\forall \epsilon<\epsilon_0$, Eq. \eqref{ellipticequationintroduction} has a $W^{1,2}(0,1)$ solution $\rho^\epsilon$,
\begin{eqnarray*}
\varlimsup_{\epsilon\to 0}m\qty(\abs{\rho^\epsilon-\hat{\rho}}>2\delta)=0.
\end{eqnarray*}
Theorem \ref{TheEqOmega} follows Lemma \ref{LemConInMea}.

\subsubsection{$\beta>1-l^\alpha(1)$}

Let $u=0$, $0<\delta<0.5-l^\alpha(1)$.

\paragraph{Upper solution}

$\exists\delta_a>0$, $\forall x\in[0,1]$, $l^{\alpha+\delta_\alpha}-l^\alpha<\delta$. $\exists A_3>0$, $\forall x\in[0,1]$, $l^{\alpha+\delta_\alpha}-A_3>l^\alpha$. Let $\rho^{\epsilon,u}=l^{\alpha+\delta_\alpha}-A_3$. Then
\begin{eqnarray*}
&&\varlimsup_{\epsilon\to0}\max_{x\in[0,1]}L\rho^{\epsilon,u}=-A_3\Omega_D(K+1)\frac{0.5-\frac{K}{K+1}}{\alpha+\delta\alpha-0.5}<0,\\
&&\rho^{\epsilon,u}(0)=\alpha+\delta_\alpha-A_3>\alpha,\\
&&\rho^{\epsilon,u}(1)=l^{\alpha+\delta_\alpha}(1)-A_3>l^\alpha(1)>1-\beta,\\
&&m\qty(\abs{\rho^{\epsilon,u}-\hat{\rho}}>\delta)=m\qty(\abs{l^{\alpha+\delta_\alpha}-A_3-l^\alpha}>\delta)=0.
\end{eqnarray*}

\paragraph{Lower solution}

Let $A_1=l^\alpha(1)+\delta$, $w^{\epsilon,1}(1)=1-\beta+\delta$, $\rho^{\epsilon,l}=l^\alpha+w^{\epsilon,1}-A_1$.

Because
\begin{eqnarray*}
&&\min_{x\in[0,1]}\qty(l^\alpha-A_1)\qty(w^{\epsilon,1}-1+A_1)\ge\delta\qty(1-2l^\alpha(1)-2\delta)>0,\\
&&\min_{x\in[0,1]}l^\alpha_{xx}>0,
\end{eqnarray*}
we have, $\exists\epsilon_0>0$, $\forall\epsilon<\epsilon_0$,
\begin{eqnarray*}
\min_{x\in[0,1]}L\rho^{\epsilon,l}>0.
\end{eqnarray*}
Moreover,
\begin{eqnarray*}
&&\rho^{\epsilon,l}(0)=\alpha+w^{\epsilon,1}(0)-A_1<\alpha,\\
&&\rho^{\epsilon,l}(1)=1-\beta,\\
&&\varlimsup_{\epsilon\to0}m\qty(\abs{\rho^{\epsilon,l}-\hat{\rho}}>\delta)\le m\qty(\abs{l^\alpha-l^\alpha}>0)+\varlimsup_{\epsilon\to0}m\qty(\abs{w^{\epsilon,1}-A_1}>\delta)=0.
\end{eqnarray*}

\paragraph{Summary}

Because $\rho^{\epsilon,l}<\rho^{\epsilon,u}$, by Theorem \ref{upperlower2W12}, we have, $\exists\epsilon_0>0$, $\forall \epsilon<\epsilon_0$, Eq. \eqref{ellipticequationintroduction} has a $W^{1,2}(0,1)$ solution $\rho^\epsilon$,
\begin{eqnarray*}
\varlimsup_{\epsilon\to 0}m\qty(\abs{\rho^\epsilon-\hat{\rho}}>\delta)=0.
\end{eqnarray*}
Theorem \ref{TheEqOmega} follows Lemma \ref{LemConInMea}.

\subsection{Phase 2: $\beta<0.5$, $1-h^\beta(0)<\alpha$}

\subsubsection{$\alpha<h^\beta(0)$}
Let $u=0$, $0<\delta<\min\qty(\frac{h^\beta(0)-\max(\alpha,1-\alpha)}{2},h^\beta(0)-0.5)$.

\paragraph{Upper solution}
Let $-\delta<A_3<0$, $\rho^{\epsilon,u}=h^\beta-A_3$. Then
\begin{eqnarray*}
&&\varlimsup_{\epsilon\to0}\max_{x\in[0,1]}L\rho^{\epsilon,u}=-A_3\Omega_D(K+1)\frac{0.5-\frac{K}{K+1}}{\max_{x\in[0,1]}h^\beta-0.5}<0,\\
&&\rho^{\epsilon,u}(0)=h^\beta(0)-A_3>\alpha,\\
&&\rho^{\epsilon,u}(1)=1-\beta-A_3>1-\beta,\\
&&m\qty(\abs{\rho^{\epsilon,u}-\hat{\rho}}>\delta)=m\qty(\abs{h^\beta-A_3-h^\beta}>\delta)=0.
\end{eqnarray*}

\paragraph{Lower solution}
Let $A_1=h^\beta(0)-\delta$, $w^{\epsilon,1}(0)=\alpha-\delta$, $0<A_3<\delta$, $\rho^{\epsilon,l}=h^\beta+w^{\epsilon,1}-A_1-A_3$.

By continuity, $\exists0<\delta_0<1$,
\begin{eqnarray*}
&&\min_{x\in[0,\delta_0]}\qty(h^\beta-A_1-A_3)\qty(w^{\epsilon,1}-1+A_1)\\
&&\ge\qty(\min_{x\in[0,\delta_0]}h^\beta-h^\beta(0)+\delta-A_3)\qty(\alpha-2\delta-1+h^\beta(0))>0.
\end{eqnarray*}
By Lemma \ref{LemwPro},
\begin{eqnarray*}
&&\max_{x\in[\delta_0,1]}\abs{w^{\epsilon,1}-A_1}\le\exp\bigg(\frac{2}{\epsilon}(1-2h^\beta(0)+2\delta)\delta_0-\log\abs{\alpha-2\delta-1+h^\beta(0)}\\
&&+\log\abs{\alpha-h^\beta(0)}+\log(2h^\beta(0)-2\delta-1)\bigg).
\end{eqnarray*}
Therefore,
\begin{eqnarray*}
\varliminf_{\epsilon\to0}\min_{x\in[0,1]}L\rho^{\epsilon,l}\ge-A_3\Omega_D(K+1)\frac{0.5-\frac{K}{K+1}}{\max_{x\in[0,1]}h^\beta-0.5}>0.
\end{eqnarray*}
Moreover,
\begin{eqnarray*}
&&\rho^{\epsilon,l}(0)=\alpha-A_3<\alpha,\\
&&\rho^{\epsilon,l}(1)=1-\beta+w^{\epsilon,1}(1)-A_1-A_3<1-\beta,\\
&&\varlimsup_{\epsilon\to0}m\qty(\abs{\rho^{\epsilon,l}-\hat{\rho}}>2\delta)\\
&&\le m\qty(\abs{h^\beta-A_3-h^\beta}>\delta)+\varlimsup_{\epsilon\to0}m\qty(\abs{w^{\epsilon,1}-A_1}>\delta)=0.
\end{eqnarray*}

\paragraph{Summary}

Because $\rho^{\epsilon,l}<\rho^{\epsilon,u}$, by Theorem \ref{upperlower2W12}, we have, $\exists\epsilon_0>0$, $\forall \epsilon<\epsilon_0$, Eq. \eqref{ellipticequationintroduction} has a $W^{1,2}(0,1)$ solution $\rho^\epsilon$,
\begin{eqnarray*}
\varlimsup_{\epsilon\to 0}m\qty(\abs{\rho^\epsilon-\hat{\rho}}>2\delta)=0.
\end{eqnarray*}
Theorem \ref{TheEqOmega} follows Lemma \ref{LemConInMea}.

\subsubsection{$\alpha>h^\beta(0)$}

Let $u=0$, $0<\delta<h^\beta(0)-0.5$.

\paragraph{Upper solution}

Let $A_1=h^\beta(0)-\delta$, $w^{\epsilon,1}(0)=\alpha-\delta$, $-\delta<A_3<0$, $\rho^{\epsilon,u}=h^\beta+w^{\epsilon,1}-A_1-A_3$. 

By continuity, $\exists 0<\delta_0<1$,
\begin{eqnarray*}
&&\min_{x\in[0,\delta_0]}\qty(h^\beta-A_1-A_3)\qty(w^{\epsilon,1}-1+A_1)\\
&&\ge\qty(\min_{x\in[0,\delta_0]}h^\beta-h^\beta(0)+\delta-A_3)\qty(2h^\beta(0)-2\delta-1)>0.
\end{eqnarray*}
By Lemma \ref{LemwPro},
\begin{eqnarray*}
\max_{x\in[\delta_0,1]}\abs{w^{\epsilon,1}-A_1}\le\exp\bigg(\frac{2}{\epsilon}(1-2h^\beta(0)+2\delta)\delta_0+\log\abs{\alpha-h^\beta(0)}\bigg).
\end{eqnarray*}
Therefore,
\begin{eqnarray*}
\varlimsup_{\epsilon\to0}\max_{x\in[0,1]}L\rho^{\epsilon,u}\le-A_3\Omega_D(K+1)\frac{0.5-\frac{K}{K+1}}{\max_{x\in[0,1]}h^\beta-0.5}<0.
\end{eqnarray*}
Moreover,
\begin{eqnarray*}
&&\rho^{\epsilon,u}(0)=\alpha-A_3>\alpha,\\
&&\rho^{\epsilon,u}1=1-\beta+w^{\epsilon,1}-A_1-A_3>1-\beta,\\
&&\varlimsup_{\epsilon\to0}m\qty(\abs{\rho^{\epsilon,u}-\hat{\rho}}>2\delta)\\
&&\le m\qty(\abs{h^\beta-A_3-h^\beta}>\delta)+\varlimsup_{\epsilon\to0}m\qty(\abs{w^{\epsilon,1}-A_1}>\delta)=0.
\end{eqnarray*}

\paragraph{Lower solution}

Let $0<A_3<\delta$, $\rho^{\epsilon,l}=h^\beta-A_3$. Then
\begin{eqnarray*}
&&\varliminf_{\epsilon\to0}\min_{x\in[0,1]}L\rho^{\epsilon,l}=-A_3\Omega_D(K+1)\frac{0.5-\frac{K}{K+1}}{\max_{x\in[0,1]}h^\beta-0.5}>0,\\
&&\rho^{\epsilon,l}(0)=h^\beta(0)-A_3<\alpha,\\
&&\rho^{\epsilon,l}(1)=1-\beta-A_3<1-\beta,\\
&&m\qty(\abs{\rho^{\epsilon,l}-\hat{\rho}}>\delta)=m\qty(\abs{h^\beta-A_3-h^\beta}>\delta)=0.
\end{eqnarray*}

\paragraph{Summary}

Because $\rho^{\epsilon,l}<\rho^{\epsilon,u}$, by Theorem \ref{upperlower2W12}, we have, $\exists\epsilon_0>0$, $\forall \epsilon<\epsilon_0$, Eq. \eqref{ellipticequationintroduction} has a $W^{1,2}(0,1)$ solution $\rho^\epsilon$,
\begin{eqnarray*}
\varlimsup_{\epsilon\to 0}m\qty(\abs{\rho^\epsilon-\hat{\rho}}>2\delta)=0.
\end{eqnarray*}
Theorem \ref{TheEqOmega} follows Lemma \ref{LemConInMea}.

\subsection{Phase 3: $\beta>0.5$, $1-h^{0.5}(0)<\alpha$}

\subsubsection{$\alpha<h^{0.5}(0)$}
Let $u=0$, $0<\delta<\min\qty(\frac{h^{0.5}(0)-\max(\alpha,1-\alpha)}{2},h^{0.5}(0)-0.5)$.

\paragraph{Upper solution}

$\exists\delta_\beta>0$, $\forall x\in[0,1]$, $h^{0.5-\delta_\beta}-h^{0.5}<\delta$. Let $\rho^{\epsilon,u}=h^{0.5-\delta_\beta}$. Then
\begin{eqnarray*}
&&\max_{x\in[0,1]}L\rho^{\epsilon,u}=\frac{\epsilon}{2}\max_{x\in[0,1]}h^{0.5-\delta_\beta}_{xx}<0,\\
&&\rho^{\epsilon,u}(0)=h^{0.5-\delta_\beta}(0)>h^{0.5}(0)>\alpha,\\
&&\rho^{\epsilon,u}(1)=0.5+\delta_\beta>0.5>1-\beta,\\
&&m\qty(\abs{\rho^{\epsilon,u}-\hat{\rho}}>\delta)=m\qty(\abs{h^{0.5-\delta_\beta}-h^{0.5}}>\delta)=0.
\end{eqnarray*}

\paragraph{Lower solution}
Let $A_1=h^{0.5}(0)-\delta$, $w^{\epsilon,1}(0)=\alpha-\delta$. $\exists\delta_\beta>0$, $\forall x\in[0,1]$, $h^{0.5-\delta_\beta}-h^{0.5}<\delta$. Let $0<\delta_2<\delta-\delta_\beta$, $A_2=0.5-\delta_2$, $w^{\epsilon,2}(1)=1-\beta-\delta_2$, $A_3=\delta$, $\rho^{\epsilon,l}=h^{0.5-\delta_\beta}+w^{\epsilon,1}+w^{\epsilon,2}-A_1-A_2-A_3$.

By continuity, $\exists0<\delta_0<1$,
\begin{eqnarray*}
&&\varliminf_{\epsilon\to0}\min_{x\in[0,\delta_0]}\qty(h^{0.5-\delta_\beta}+w^{\epsilon,2}-A_1-A_2-A_3)\qty(w^{\epsilon,1}-1+A_1)\\
&&\ge\qty(h^{0.5-\delta_\beta}(\delta_0)-h^{0.5}(0))\qty(\alpha-2\delta-1+h^{0.5}(0))>0.
\end{eqnarray*}
By Lemma \ref{LemwPro},
\begin{eqnarray*}
&&\max_{x\in[\delta_0,1]}\abs{w^{\epsilon,1}-A_1}\le\exp\bigg(\frac{2}{\epsilon}\qty(1+2\delta-2h^{0.5}(0))\delta_0\\
&&-\log\abs{\alpha-2\delta-1+h^{0.5}(0)}+\log\abs{\alpha-h^{0.5}(0)}\\
&&+\log(2h^{0.5}(0)-2\delta-1)\bigg).
\end{eqnarray*}
By continuity, $\exists0<\delta_1<1$,
\begin{eqnarray*}
&&\varliminf_{\epsilon\to0}\min_{x\in[1-\delta_1,1]}\qty(h^{0.5-\delta_\beta}+w^{\epsilon,1}-A_1-A_2-A_3)\qty(w^{\epsilon,2}-1+A_2)\\
&&\ge2\delta_2\qty(0.5+\delta-h^{0.5-\delta_\beta}(1-\delta_1)-\delta_2)>0.
\end{eqnarray*}
By Lemma \ref{LemwPro},
\begin{eqnarray}\label{EqGenPhase3w2}
\max_{x\in[0,1-\delta_1]}\abs{w^{\epsilon,2}-A_2}\le\exp\bigg(-\frac{4\delta_2\delta_1}{\epsilon}+\log\abs{0.5-\beta}\bigg).
\end{eqnarray}
Therefore,
\begin{eqnarray*}
\varliminf_{\epsilon\to0}\min_{x\in[0,1]}L\rho^{\epsilon,l}\ge-\delta\Omega_D(K+1)\frac{0.5-\frac{K}{K+1}}{h^{0.5-\delta_\beta}(0)-0.5}>0.
\end{eqnarray*}
Moreover,
\begin{eqnarray*}
&&\rho^{\epsilon,l}(0)=\alpha-\delta+h^{0.5-\delta_\beta}(0)-h^{0.5}(0)+w^{\epsilon,2}(0)-0.5+\delta_2<\alpha,\\
&&\rho^{\epsilon,l}(1)=1-\beta+\delta_\beta+w^{\epsilon,1}(1)-h^{0.5}(0)<1-\beta,\\
&&\varlimsup_{\epsilon\to0}m\qty(\abs{\rho^{\epsilon,l}-\hat{\rho}}>2\delta)\\
&&\le m\qty(\abs{h^{0.5-\delta_\beta}-\delta-h^{0.5}}>\delta)+\varlimsup_{\epsilon\to0}m\qty(\abs{w^{\epsilon,1}-A_1+w^{\epsilon,2}-A_2}>\delta)=0.
\end{eqnarray*}

\paragraph{Summary}

Because $\rho^{\epsilon,l}<\rho^{\epsilon,u}$, by Theorem \ref{upperlower2W12}, we have, $\exists\epsilon_0>0$, $\forall \epsilon<\epsilon_0$, Eq. \eqref{ellipticequationintroduction} has a $W^{1,2}(0,1)$ solution $\rho^\epsilon$,
\begin{eqnarray*}
\varlimsup_{\epsilon\to 0}m\qty(\abs{\rho^\epsilon-\hat{\rho}}>2\delta)=0.
\end{eqnarray*}
Theorem \ref{TheEqOmega} follows Lemma \ref{LemConInMea}.

\subsubsection{$\alpha>h^{0.5}(0)$}

Let $u=0$, $0<\delta<h^{0.5}(0)-0.5$.

\paragraph{Upper solution}

Let $A_1=h^{0.5}(0)$, $w^{\epsilon,1}(0)=\alpha$. $\exists\delta_\beta>0$, $\forall x\in[0,1]$, $h^{0.5-\delta_\beta}-h^{0.5}<\delta$. $\rho^{\epsilon,u}=h^{0.5-\delta_\beta}+w^{\epsilon,1}-A_1$.

By continuity, $\exists 0<\delta_0<1$,
\begin{eqnarray*}
&&\min_{x\in[0,\delta_0]}\qty(h^{0.5-\delta_\beta}-A_1)\qty(w^{\epsilon,1}-1+A_1)\\
&&\ge\qty(h^{0.5-\delta_\beta}(\delta_0)-h^{0.5}(0))\qty(2h^{0.5}(0)-1)>0.
\end{eqnarray*}
By Lemma \ref{LemwPro},
\begin{eqnarray*}
\max_{x\in[\delta_0,1]}\abs{w^{\epsilon,1}-A_1}\le\exp\bigg(\frac{2}{\epsilon}\qty(1-2h^{0.5}(0))\delta_0+\log\abs{\alpha-h^{0.5}(0)}\bigg).
\end{eqnarray*}
Because $\max_{x\in[0,1]}h^{0.5}_{xx}<0$, we have, $\exists\epsilon_0>0$, $\forall\epsilon<\epsilon_0$,
\begin{eqnarray*}
\max_{x\in[0,1]}L\rho^{\epsilon,u}<0.
\end{eqnarray*}
Moreover,
\begin{eqnarray*}
&&\rho^{\epsilon,u}(0)=h^{0.5-\delta_\beta}(0)+\alpha-h^{0.5}(0)>\alpha,\\
&&\rho^{\epsilon,u}(1)=0.5+\delta_\beta+w^{\epsilon,1}(1)-A_1>0.5>1-\beta,\\
&&\varlimsup_{\epsilon\to 0}m\qty(\abs{\rho^{\epsilon,u}-\hat{\rho}}>2\delta)\\
&&\le m\qty(\abs{h^{0.5-\delta_\beta}-h^{0.5}}>\delta)+\varlimsup_{\epsilon\to 0}m\qty(\abs{w^{\epsilon,1}-A_1}>\delta)=0.
\end{eqnarray*}

\paragraph{Lower solution}

$\exists\delta_\beta>0$, $\forall x\in[0,1]$, $h^{0.5-\delta_\beta}-h^{0.5}<\delta$. Let $0<\delta_2<\delta-\delta_\beta$, $A_2=0.5-\delta_2$, $w^{\epsilon,2}(1)=1-\beta-\delta_2$, $A_3=\delta$, $\rho^{\epsilon,l}=h^{0.5-\delta_\beta}+w^{\epsilon,2}-A_2-A_3$.

By continuity, $\exists0<\delta_1<1$,
\begin{eqnarray*}
&&\min_{x\in[1-\delta_1,1]}\qty(h^{0.5-\delta_\beta}-A_2-A_3)\qty(w^{\epsilon,2}-1+A_2)\\
&&\ge2\delta_2\qty(0.5+\delta-h^{0.5-\delta_\beta}(1-\delta_1)-\delta_2)>0.
\end{eqnarray*}
By Lemma \ref{LemwPro}, we have Eq. \eqref{EqGenPhase3w2}. Therefore,
\begin{eqnarray*}
\varliminf_{\epsilon\to0}\min_{x\in[0,1]}L\rho^{\epsilon,l}\le-\delta\Omega_D(K+1)\frac{0.5-\frac{K}{K+1}}{h^{0.5-\delta_\beta}(0)-0.5}>0.
\end{eqnarray*}
Moreover,
\begin{eqnarray*}
&&\rho^{\epsilon,l}(0)=h^{0.5-\delta_\beta}(0)-\delta+w^{\epsilon,2}(0)-0.5+\delta_2<h^{0.5}(0)<\alpha,\\
&&\rho^{\epsilon,l}(1)=1-\beta+\delta_\beta-\delta<1-\beta,\\
&&\varlimsup_{\epsilon\to0}m\qty(\abs{\rho^{\epsilon,l}-\hat{\rho}}>2\delta)\\
&&\le m\qty(\abs{h^{0.5}-\delta-h^{0.5}}>\delta)+\varlimsup_{\epsilon\to0}m\qty(\abs{w^{\epsilon,2}-A_2}>\delta)=0.
\end{eqnarray*}

\paragraph{Summary}

Because $\rho^{\epsilon,l}<\rho^{\epsilon,u}$, by Theorem \ref{upperlower2W12}, we have, $\exists\epsilon_0>0$, $\forall \epsilon<\epsilon_0$, Eq. \eqref{ellipticequationintroduction} has a $W^{1,2}(0,1)$ solution $\rho^\epsilon$,
\begin{eqnarray*}
\varlimsup_{\epsilon\to 0}m\qty(\abs{\rho^\epsilon-\hat{\rho}}>2\delta)=0.
\end{eqnarray*}
Theorem \ref{TheEqOmega} follows Lemma \ref{LemConInMea}.

\subsection{Phase 4: $\beta<0.5$, $g^\beta(0)<\alpha<1-h^\beta(0)$}

Let $0<\delta<\min\qty(1-h^\beta(0)-\alpha,\alpha-g^{\beta}(0))$.

\paragraph{Upper solution}

$\exists \delta_0>0$, $\forall x\in[0,1]$, $l^{\alpha+\delta_\alpha}-l^\alpha<\delta$. Then $\alpha+\delta_\alpha<\alpha+\delta<1-h^\beta(0)$. By Lemma \ref{LemXdEU}, $\exists x_{d,u}\in(0,1)$, $l^{\alpha+\delta_\alpha}(x_{d,u})+h^\beta(x_{d,u})=1$.
Because
\begin{eqnarray*}
l^{\alpha+\delta_\alpha}(x_d)+h^\beta(x_d)>l^\alpha,(x_d)+h^\beta(x_d)=1,
\end{eqnarray*}
by Lemma \ref{LemXdEU}, $x_{d,u}<x_d$. $\exists\delta_0>0$, $\forall x\in[0,1]$, $l^{\alpha+\delta_\alpha}-\delta_0>l^\alpha$. Let $u=0$, $w^{\epsilon,1}(x_{d,u})=0.5$,
\begin{eqnarray*}
&&A_1=\left\{\begin{array}{ll}
l^{\alpha+\delta_\alpha}(x_{d,u})-\frac{2}{3}\delta_0, & 0\le x\le x_{d,u},\\
h^\beta(x_{d,u})+\frac{2}{3}\delta_0, & x_{d,u}<x\le 1,
\end{array}\right.\\
&&A_3=\left\{\begin{array}{ll}
\delta_0, & 0\le x\le x_{d,u},\\
-\frac{\delta_0}{3}, & x_{d,u}<x\le 1,
\end{array}\right.\\
&&\rho^u=\left\{\begin{array}{ll}
l^{\alpha+\delta_\alpha}, & 0\le x\le x_{d,u},\\
h^\beta, & x_{d,u}<x\le 1.
\end{array}\right.\\
&&\rho^{\epsilon,u}=\rho^u+w^{\epsilon,1}-A_1-A_3.
\end{eqnarray*}
Because
\begin{eqnarray*}
&&\min_{x\in[0,x_{d,u}]}\qty(l^{\alpha+\delta_\alpha}-A_1-A_3)\qty(w^{\epsilon,1}-1+A_1)\\
&&\ge\frac{\delta_0}{3}\qty(0.5-l^{\alpha+\delta_\alpha}(x_{d,u})+\frac{2}{3}\delta_0)>0,
\end{eqnarray*}
we have
\begin{eqnarray*}
\varlimsup_{\epsilon\to0}\max_{x\in[0,x_{d,u}]}L\rho^{\epsilon,u}\le-\delta_0\Omega_D(K+1)\frac{0.5-\frac{K}{K+1}}{\alpha+\delta_\alpha-0.5}<0.
\end{eqnarray*}
By continuity, $\exists0<\delta_h<1-x_{d,u}$,
\begin{eqnarray*}
&&\sup_{x\in(x_{d,u},x_{d,u}+\delta_h]}\qty(h^\beta-A_1-A_3)\qty(w^{\epsilon,1}-1+A_1)\\
&&\le\qty(\max_{x\in[x_{d,u},x_{d,u}+\delta_h]}h^\beta-h^\beta(x_{d,u})-\frac{\delta_0}{3})\qty(0.5-l^{\alpha+\delta_\alpha}(x_{d,u})+\frac{2}{3}\delta_0)<0.
\end{eqnarray*}
By Lemma \ref{LemwPro},
\begin{eqnarray*}
&&\max_{x\in[x_{d,u}+\delta_h,1]}\abs{w^{\epsilon,1}-A_1}\\
&&\le\exp\bigg(\frac{2}{\epsilon}(1-2h^\beta(x_{d,u})-\frac{4}{3}\delta_0)\delta_h-\log\abs{0.5-l^{\alpha+\delta_\alpha}(x_{d,u})+\frac{2}{3}\delta_0}\\
&&+\log\abs{0.5-h^\beta(x_{d,u})-\frac{2}{3}\delta_0}+\log(2h^\beta(x_{d,u})+\frac{4}{3}\delta_0-1)\bigg).
\end{eqnarray*}
Therefore,
\begin{eqnarray*}
\varlimsup_{\epsilon\to0}\sup_{x\in(x_{d,u},1]}L\rho^{\epsilon,u}\le\frac{\delta_0}{3}\Omega_D(K+1)\frac{0.5-\frac{K}{K+1}}{\max_{x\in[x_{d,u},1]}h^\beta-0.5}<0.
\end{eqnarray*}
Moreover,
\begin{eqnarray*}
&&\rho^{\epsilon,u}_x\qty(x_{d,u}^-)=\frac{(K+1)\Omega_D\qty(l^{\alpha+\delta_\alpha}(x_{d,u})-\frac{K}{K+1})}{2(l^{\alpha+\delta_\alpha}(x_{d,u})-0.5)}+w^{\epsilon,1}_x(x_{d,u})\\
&&>\frac{(K+1)\Omega_D}{2}+w^{\epsilon,1}_x(x_{d,u})\\
&&>\frac{(K+1)\Omega_D\qty(h^\beta(x_{d,u})-\frac{K}{K+1})}{2\qty(h^\beta(x_{d,u})-0.5)}+w^{\epsilon,1}_x(x_{d,u})=\rho^{\epsilon,u}_x\qty(x_{d,u}^+),\\
&&\rho^{\epsilon,u}(0)=\alpha+\delta_\alpha+w^{\epsilon,1}(0)-A_1-\delta_0>\alpha,\\
&&\lim_{\epsilon\to0}\rho^{\epsilon,u}(1)=1-\beta+\frac{\delta_0}{3}>1-\beta,\\
&&\varlimsup_{\epsilon\to0}m\qty(\abs{\rho^{\epsilon,u}-\hat{\rho}}>2\delta)\le 
m\qty(\abs{\rho^u-A_3-\hat{\rho}}>\delta)+\varlimsup_{\epsilon\to0}m\qty(\abs{w^{\epsilon,1}-A_1}>\delta)\\
&&=x_d-x_{d,u}<\delta.
\end{eqnarray*}

\paragraph{Lower solution}

$\exists0<\delta_\alpha<\delta$, $\forall x\in[0,1]$, $l^\alpha-l^{\alpha-\delta_\alpha}<\delta$. Then $\alpha-\delta_\alpha>\alpha-\delta>g^\beta(0)$. By Lemma \ref{LemXdEU}, $\exists x_{d,l}\in(0,1)$, $l^{\alpha-\delta_\alpha}(x_{d,l})+h^\beta(x_{d,l})=1$. Because
\begin{eqnarray*}
l^{\alpha-\delta_\alpha}(x_d)+h^\beta(x_d)<l^{\alpha}(x_d)+h^\beta(x_d)=1,
\end{eqnarray*}
by Lemma \ref{LemXdEU}, $x_d<x_{d,l}$. $\exists\delta_\alpha$, $x_{d,l}-x_d<\delta$. By Lemma \ref{LemXdEU}, $l^{\alpha-\delta_\alpha}(x_{d,l}+\delta)+h^\beta(x_{d,l}+\delta)>1$. Let
\begin{eqnarray*}
&&0<\delta_1<\min\qty(\delta,l^{\alpha-\delta_\alpha}(x_{d,l}+\delta)+h^\beta(x_{d,l}+\delta)-1),\\
&&\delta_2=\frac{l^{\alpha-\delta_\alpha}(x_{d,l}+\delta)+h^\beta(x_{d,l}+\delta)-1-\delta_1}{2},\\
&&u=\left\{\begin{array}{ll}
\frac{\delta_2}{2}, & 0\le x\le x_{d,l}+\delta,\\
0, & x_{d,l}+\delta<x\le 1,
\end{array}\right.\\
&&w^{\epsilon,1}(x_{d,l}+\delta)=0.5,\\
&&A_1=\left\{\begin{array}{ll}
l^{\alpha-\delta_\alpha}(x_{d,l}+\delta)-\delta_2, & 0\le x\le x_{d,l}+\delta,\\
h^\beta(x_{d,l}+\delta)-\delta_1-\delta_2, & x_{d,l}+\delta<x\le 1,
\end{array}\right.\\
&&A_3=\left\{\begin{array}{ll}
0, & 0\le x\le x_{d,l}+\delta,\\
\delta_1, & x_{d,l}+\delta<x\le 1,
\end{array}\right.\\
&&\rho^l=\left\{\begin{array}{ll}
l^{\alpha-\delta_\alpha}, & 0\le x\le x_{d,l}+\delta,\\
h^\beta, & x_{d,l}+\delta<x\le 1,
\end{array}\right.\\
&&\rho^{\epsilon,l}=\rho^l+w^{\epsilon,1}-A_1-A_3.
\end{eqnarray*}
By continuity, $\exists 0<\delta_l<x_{d,l}+\delta$,
\begin{eqnarray*}
&&\max_{x\in[x_{d,l}+\delta-\delta_l,x_{d,l}+\delta]}\qty(l^{\alpha-\delta_\alpha}-A_1-A_3+u(w^{\epsilon,1}-0.5))\qty(w^{\epsilon,1}-1+A_1)\\
&&\le\qty(l^{\alpha-\delta_\alpha}(x_{d,l}+\delta-\delta_l)-l^{\alpha-\delta_\alpha}(x_{d,l}+\delta)+\frac{\delta_2}{2})\qty(l^{\alpha-\delta_\alpha}(x_{d,l}+\delta)-\delta_2-0.5)\\
&&<0.
\end{eqnarray*}
By Lemma \ref{LemwPro},
\begin{eqnarray*}
&&\max_{x\in[0,x_{d,l}+\delta-\delta_l]}\abs{w^{\epsilon,1}-A_1}\\
&&\le\exp\bigg(\frac{2}{\epsilon}(2l^{\alpha+\delta_\alpha}(x_{d,l}+\delta)-2\delta_2-1)\delta_l+\log\abs{0.5-l^{\alpha-\delta_\alpha}(x_{d,l}+\delta)+\delta_2}\\
&&-\log\abs{l^{\alpha-\delta_\alpha}(x_{d,l}+\delta)-\delta_2-0.5}+\log(1-2l^{\alpha-\delta_\alpha}(x_{d,l}+\delta)+2\delta_2)\bigg).
\end{eqnarray*}
Because $\min_{x\in[0,x_{d,l}+\delta]}l^{\alpha-\delta_\alpha}_{xx}>0$, we have, $\exists\epsilon_0>0$, $\forall\epsilon<\epsilon_0$,
\begin{eqnarray*}
\min_{x\in[0,x_{d,l}+\delta]}L\rho^{\epsilon,l}>0.
\end{eqnarray*}
By continuity, $\exists 0<\delta_h<1-x_{d,l}-\delta$,
\begin{eqnarray*}
&&\inf_{x\in(x_{d,l}+\delta,x_{d,l}+\delta+\delta_h]}\qty(h^\beta-A_1-A_3)\qty(w^{\epsilon,1}-1+A_1)\\
&&\ge\qty(\min_{x\in[x_{d,l}+\delta,x_{d,l}+\delta+\delta_h]}h^\beta-h^\beta(x_{d,l}+\delta)+\delta_2)\qty(h^\beta(x_{d,l}+\delta)-\delta_1-\delta_2-0.5)\\
&&>0.
\end{eqnarray*}
By Lemma \ref{LemwPro},
\begin{eqnarray*}
&&\max_{x\in[x_{d,l}+\delta+\delta_h,1]}\abs{w^{\epsilon,1}-A_1}\\
&&\le\exp\bigg(\frac{2}{\epsilon}(1-2h^\beta(x_{d,l}+\delta)+2\delta_1+2\delta_2)\delta_h-\log\abs{0.5-l^{\alpha-\delta_\alpha}(x_{d,l}+\delta)+\delta_2}\\
&&+\log\abs{0.5-h^\beta(x_{d,l}+\delta)+\delta_1+\delta_2}+\log(2h^\beta(x_{d,l}+\delta)-2\delta_1-2\delta_2-1)\bigg).
\end{eqnarray*}
Therefore,
\begin{eqnarray*}
\varliminf_{\epsilon\to0}\inf_{x\in(x_{d,l}+\delta,1]}L\rho^{\epsilon,l}\ge-\delta_1\Omega_D(K+1)\frac{0.5-\frac{K}{K+1}}{\max_{x\in[x_{d,l}+\delta,1]}h^\beta-0.5}>0.
\end{eqnarray*}
Moreover,
\begin{eqnarray*}
&&\lim_{\epsilon\to0}\epsilon\rho^{\epsilon,l}_x\qty(\qty(x_{d,l}+\delta)^-)=2\qty(1-\frac{\delta_2}{2})\qty(0.5-l^{\alpha-\delta_\alpha}(x_{d,l}+\delta)+\delta_2)^2\\
&&<2\qty(0.5-l^{\alpha-\delta_\alpha}(x_{d,l}+\delta)+\delta_2)^2=\lim_{\epsilon\to0}\epsilon\rho^{\epsilon,l}_x\qty(\qty(x_{d,l}+\delta)^+),\\
&&\lim_{\epsilon\to0}\rho^{\epsilon,l}(0)=\alpha-\delta_\alpha<\alpha,\\
&&\rho^{\epsilon,l}(1)=1-\beta+w^{\epsilon,1}(1)-1+A_1-\delta_1<1-\beta,\\
&&\varlimsup_{\epsilon\to0}m\qty(\abs{\rho^{\epsilon,l}-\hat{\rho}}>2\delta)\\
&&\le 
m\qty(\abs{\rho^l-A_3-\hat{\rho}}>\delta)+\varlimsup_{\epsilon\to0}m\qty(\abs{w^{\epsilon,1}-A_1}>\delta)=x_{d,l}-x_d+\delta<2\delta.
\end{eqnarray*}

\paragraph{Summary}

$\exists\epsilon_0>0$, $\forall\epsilon<\epsilon_0$, $\rho^{\epsilon,l}<\rho^{\epsilon,u}$. By Theorem \ref{upperlower2W12}, $\forall \epsilon<\epsilon_0$, Eq. \eqref{ellipticequationintroduction} has a $W^{1,2}(0,1)$ solution $\rho^\epsilon$,
\begin{eqnarray*}
\varlimsup_{\epsilon\to 0}m\qty(\abs{\rho^\epsilon-\hat{\rho}}>2\delta)\le 3\delta.
\end{eqnarray*}
Theorem \ref{TheEqOmega} follows Lemma \ref{LemConInMea}.

\subsection{Phase 5: $\beta>0.5$, $g^{0.5}(0)<\alpha<1-h^{0.5}(0)$}

Let $0<\delta<\min\qty(\frac{1-h^{0.5}(0)-\alpha}{2},\alpha-g^{0.5}(0))$.

\paragraph{Upper solution}

$\exists \delta_\alpha>0$, $\forall x\in[0,1]$, $l^{\alpha+\delta_\alpha}-l^\alpha<\delta$. $\exists\delta_\beta>0$, $\forall x\in[0,1]$, $h^{0.5-\delta_\beta}-h^{0.5}<\delta$. Then
\begin{eqnarray*}
\alpha+\delta_\alpha<\alpha+\delta<1-h^{0.5}(0)-\delta<1-h^{0.5-\delta_\beta}(0).
\end{eqnarray*}
By Lemma \ref{LemXdEU}, $\exists x_{d,u}\in(0,1)$, $l^{\alpha+\delta_\alpha}(x_{d,u})+h^{0.5-\delta_\beta}(x_{d,u})=1$. Because
\begin{eqnarray*}
l^{\alpha+\delta_\alpha}(x_d)+h^{0.5-\delta_\beta}(x_d)>l^\alpha(x_d)+h^{0.5}(x_d)=1,
\end{eqnarray*}
by Lemma \ref{LemXdEU}, $x_{d,u}<x_d$. $\exists\qty(\delta_\alpha,\delta_\beta)$, $x_d-x_{d,u}<\delta$. $\exists\delta_0>0$, $\forall x\in[0,1]$, $l^{\alpha+\delta_\alpha}-\delta_0>l^\alpha$. Let $u=0$, $w^{\epsilon,1}(x_{d,u})=0.5$,
\begin{eqnarray*}
&&A_1=\left\{\begin{array}{ll}
l^{\alpha+\delta_\alpha}(x_{d,u})-\frac{\delta_0}{2}, & 0\le x\le x_{d,u},\\
h^{0.5-\delta_\beta}(x_{d,u})+\frac{\delta_0}{2}, & x_{d,u}<x\le 1,
\end{array}\right.\\
&&A_3=\left\{\begin{array}{ll}
\delta_0, & 0\le x\le x_{d,u},\\
0, & x_{d,u}<x\le 1,
\end{array}\right.\\
&&\rho^u=\left\{\begin{array}{ll}
l^{\alpha+\delta_\alpha}, & 0\le x\le x_{d,u},\\
h^{0.5-\delta_\beta}, & x_{d,u}<x\le 1,
\end{array}\right.\\
&&\rho^{\epsilon,u}=\rho^u+w^{\epsilon,1}-A_1-A_3.
\end{eqnarray*}
Because
\begin{eqnarray*}
\min_{x\in[0,x_{d,u}]}\qty(l^{\alpha+\delta_\alpha}-A_1-A_3)\qty(w^{\epsilon,1}-1+A_1)\ge\frac{\delta_0}{2}\qty(0.5-l^{\alpha+\delta_\alpha}(x_{d,u})+\frac{\delta_0}{2})>0,
\end{eqnarray*}
we have
\begin{eqnarray*}
\varlimsup_{\epsilon\to0}\max_{x\in[0,x_{d,u}]}L\rho^{\epsilon,u}\le-\delta_0\Omega_D(K+1)\frac{0.5-\frac{K}{K+1}}{\alpha+\delta_\alpha-0.5}<0.
\end{eqnarray*}
Because
\begin{eqnarray*}
&&\sup_{x\in(x_{d,u},1]}\qty(h^{0.5-\delta_\beta}-A_1-A_3)\qty(w^{\epsilon,1}-1+A_1)\\
&&\le-\frac{\delta_0}{2}\qty(h^{0.5-\delta_\beta}(x_{d,u})+\frac{\delta_0}{2}-0.5)<0,\\
&&\max_{x\in[x_{d,u},1]}h^{0.5-\delta_\beta}_{xx}<0,
\end{eqnarray*}
we have, $\exists\epsilon_0>0$, $\forall\epsilon<\epsilon_0$,
\begin{eqnarray*}
\sup_{x\in(x_{d,u},1]}L\rho^{\epsilon,u}<0.
\end{eqnarray*}
Moreover,
\begin{eqnarray*}
&&\rho^{\epsilon,u}_x\qty(x_{d,u}^-)=\frac{(K+1)\Omega_D\qty(l^{\alpha+\delta_\alpha}(x_{d,u})-\frac{K}{K+1})}{2(l^{\alpha+\delta_\alpha}(x_{d,u})-0.5)}+w^{\epsilon,1}_x(x_{d,u})\\
&&>\frac{(K+1)\Omega_D}{2}+w^{\epsilon,1}_x(x_{d,u})\\
&&>\frac{(K+1)\Omega_D\qty(h^{0.5-\delta_\beta}(x_{d,u})-\frac{K}{K+1})}{2\qty(h^{0.5-\delta_\beta}(x_{d,u})-0.5)}+w^{\epsilon,1}_x(x_{d,u})=\rho^{\epsilon,u}_x\qty(x_{d,u}^+),\\
&&\varlimsup_{\epsilon\to0}m\qty(\abs{\rho^{\epsilon,u}-\hat{\rho}}>2\delta)\le m\qty(\abs{\rho^u-A_3-\hat{\rho}}>\delta)+\varlimsup_{\epsilon\to0}m\qty(\abs{w^{\epsilon,1}-A_1}>\delta)\\
&&=x_d-x_{d,u}<\delta.
\end{eqnarray*}

\paragraph{Lower solution}

$\exists\delta_\alpha>0$, $\forall x\in[0,1]$, $l^\alpha-l^{\alpha-\delta_\alpha}<\delta$. Then $\alpha-\delta_\alpha>\alpha-\delta>g^{0.5}(0)$. By Lemma \ref{LemXdEU}, $\exists x_{d,l}\in(0,1)$, $l^{\alpha-\delta_\alpha}(x_{d,l})+h^{0.5}(x_{d,l})=1$. Because
\begin{eqnarray*}
l^{\alpha-\delta_\alpha}(x_d)+h^{0.5}(x_d)<l^\alpha(x_d)+h^{0.5}(x_d)=1,
\end{eqnarray*}
by Lemma \ref{LemXdEU}, $x_d<x_{d,l}$. $\exists\delta_\alpha$, $x_{d,l}-x_d<\delta$. By Lemma \ref{LemXdEU}, $l^{\alpha-\delta_\alpha}(x_{d,l}+\delta)+h^{0.5}(x_{d,l}+\delta)>1$. Let $\delta_1=\min\qty(\delta,l^{\alpha-\delta_\alpha}(x_{d,l}+\delta)+h^{0.5}(x_{d,l}+\delta)-1)$. $\exists\delta_\beta>0$, $\forall x\in[0,1]$, $h^{0.5-\delta_\beta}-h^{0.5}<\delta_1$. Let
\begin{eqnarray*}
&&\delta_2=\frac{l^{\alpha-\delta_\alpha}(x_{d,l}+\delta)+h^{0.5-\delta_\beta}(x_{d,l}+\delta)-\delta_1-1}{2},\\
&&u=\left\{\begin{array}{ll}
\frac{\delta_2}{2}, & 0\le x\le x_{d,l}+\delta,\\
0, & x_{d,l}+\delta<x\le 1,
\end{array}\right.\\
&&w^{\epsilon,1}(x_{d,l}+\delta)=0.5,\\
&&A_1=\left\{\begin{array}{ll}
l^{\alpha-\delta_\alpha}(x_{d,l}+\delta)-\delta_2, & 0\le x\le x_{d,l}+\delta,\\
h^{0.5-\delta_\beta}(x_{d,l}+\delta)-\delta_1-\delta_2, & x_{d,l}+\delta<x\le 1,
\end{array}\right.\\
&&0<\delta_3<\delta_1-\delta_\beta,\\
&&A_2=0.5-\delta_3,\\
&&w^{\epsilon,2}(1)=1-\beta-\delta_3,\\
&&A_3=\left\{\begin{array}{ll}
0, & 0\le x\le x_{d,l}+\delta,\\
\delta_1, & x_{d,l}+\delta<x\le 1,
\end{array}\right.\\
&&\rho^l=\left\{\begin{array}{ll}
l^{\alpha-\delta_\alpha}, & 0\le x\le x_{d,l}+\delta,\\
h^{0.5-\delta_\beta}, & x_{d,l}+\delta<x\le 1,
\end{array}\right.\\
&&\rho^{\epsilon,l}=\rho^l+w^{\epsilon,1}+w^{\epsilon,2}-A_1-A_2-A_3.
\end{eqnarray*}
By continuity, $\exists0<\delta_l<x_{d,l}+\delta$,
\begin{eqnarray*}
&&\varlimsup_{\epsilon\to0}\max_{x\in[x_{d,l}+\delta-\delta_l,x_{d,l}+\delta]}\qty(l^{\alpha-\delta_\alpha}+w^{\epsilon,2}-A_1-A_2-A_3)\qty(w^{\epsilon,1}-1+A_1)\\
&&\le\qty(l^{\alpha-\delta_\alpha}(x_{d,l}+\delta-\delta_l)-l^{\alpha-\delta_\alpha}(x_{d,l}+\delta)+\delta_2)\qty(l^{\alpha-\delta_\alpha}(x_{d,l}+\delta)-\delta_2-0.5)\\
&&<0.
\end{eqnarray*}
By Lemma \ref{LemwPro},
\begin{eqnarray*}
&&\max_{x\in[0,x_{d,l}+\delta-\delta_l]}\abs{w^{\epsilon,1}-A_1}\\
&&\le\exp\bigg(\frac{2}{\epsilon}(2l^{\alpha-\delta_\alpha}(x_{d,l}+\delta)-2\delta_2-1)\delta_l+\log\abs{0.5-l^{\alpha-\delta_\alpha}(x_{d,l}+\delta)+\delta_2}\\
&&-\log\abs{l^{\alpha-\delta_\alpha}(x_{d,l}+\delta)-\delta_2-0.5}+\log(1-2l^{\alpha-\delta_\alpha}(x_{d,l}+\delta)+2\delta_2)\bigg),\\
&&\max_{x\in[0,x_{d,l}+\delta]}\abs{w^{\epsilon,2}-A_2}\le\exp\bigg(-\frac{4\delta_3}{\epsilon}(1-x_{d,l}-\delta)+\log\abs{0.5-\beta}\bigg).
\end{eqnarray*}
Because $\min_{x\in[0,x_{d,l}+\delta]}l^{\alpha-\delta_\alpha}_{xx}>0$, we have, $\exists\epsilon_0>0$, $\forall\epsilon<\epsilon_0$,
\begin{eqnarray*}
\min_{x\in[0,x_{d,l}+\delta]}L\rho^{\epsilon,l}>0.
\end{eqnarray*}
By continuity, $\exists0<\delta_{h,1},\delta_{h,2}<1-x_{d,l}-\delta$,
\begin{eqnarray*}
&&\varliminf_{\epsilon\to0}\inf_{x\in(x_{d,l}+\delta,x_{d,l}+\delta+\delta_{h,1}]}\qty(h^{0.5-\delta_\beta}+w^{\epsilon,2}-A_1-A_2-A_3)\qty(w^{\epsilon,1}-1+A_1)\\
&&\ge\qty(h^{0.5-\delta_\beta}(x_{d,l}+\delta+\delta_{h,1})-h^{0.5-\delta_\beta}(x_{d,l}+\delta)+\delta_2)\\
&&\qty(0.5-l^{\alpha-\delta_\alpha}(x_{d,l}+\delta)+\delta_2)>0,\\
&&\varliminf_{\epsilon\to0}\min_{x\in[1-\delta_{h,2},1]}\qty(h^{0.5-\delta_\beta}+w^{\epsilon,1}-A_1-A_2-A_3)\qty(w^{\epsilon,2}-1+A_2)\\
&&\ge2\qty(0.5+\delta_1-h^{0.5-\delta_\beta}(1-\delta_{h,2})-\delta_3)\delta_3>0.
\end{eqnarray*}
By Lemma \ref{LemwPro},
\begin{eqnarray*}
&&\max_{x\in[x_{d,l}+\delta+\delta_{h,1},1]}\abs{w^{\epsilon,1}-A_1}\\
&&\le\exp\bigg(\frac{2}{\epsilon}(2l^{\alpha-\delta_\alpha}(x_{d,l}+\delta)-2\delta_2-1)\delta_{h,1}-\log\abs{0.5-l^{\alpha-\delta_\alpha}(x_{d,l}+\delta)+\delta_2}\\
&&+\log\abs{l^{\alpha-\delta_\alpha}(x_{d,l}+\delta)-\delta_2-0.5}+\log(1-2l^{\alpha-\delta_\alpha}(x_{d,l}+\delta)+2\delta_2)\bigg),\\
&&\sup_{x\in(x_{d,l}+\delta,1-\delta_{h,2}]}\abs{w^{\epsilon,2}-A_2}\le\exp\bigg(-\frac{4\delta_3\delta_{h,2}}{\epsilon}+\log\abs{0.5-\beta}\bigg).
\end{eqnarray*}
Therefore,
\begin{eqnarray*}
\varliminf_{\epsilon\to0}\inf_{x\in(x_{d,l}+\delta,1]}L\rho^{\epsilon,l}\ge-\delta_1\Omega_D(K+1)\frac{0.5-\frac{K}{K+1}}{h^{0.5-\delta_\beta}(x_{d,l}+\delta)-0.5}>0.
\end{eqnarray*}
Moreover,
\begin{eqnarray*}
&&\lim_{\epsilon\to0}\epsilon\rho^{\epsilon,l}_x\qty(\qty(x_{d,l}+\delta)^-)=2\qty(1-\frac{\delta_2}{2})(0.5-l^{\alpha-\delta_\alpha}(x_{d,l}+\delta)+\delta_2)^2\\
&&<2(0.5-l^{\alpha-\delta_\alpha}(x_{d,l}+\delta)+\delta_2)^2=\lim_{\epsilon\to0}\epsilon\rho^{\epsilon,l}_x\qty(\qty(x_{d,l}+\delta)^+),\\
&&\lim_{\epsilon\to0}\rho^{\epsilon,l}(0)=\alpha-\delta_\alpha<\alpha,\\
&&\rho^{\epsilon,l}(1)=1-\beta+\delta_\beta+w^{\epsilon,1}(1)-A_1-\delta_1<1-\beta,\\
&&\varlimsup_{\epsilon\to0}m\qty(\abs{\rho^{\epsilon,l}-\hat{\rho}}>2\delta)\le m\qty(\abs{\rho^l-A_3-\hat{\rho}}>\delta)\\
&&+\varlimsup_{\epsilon\to0}m\qty(\abs{w^{\epsilon,1}-A_1+w^{\epsilon,2}-A_2}>\delta)=x_{d,l}-x_d+\delta<2\delta.
\end{eqnarray*}

\paragraph{Summary}

$\exists\epsilon_0>0$, $\forall\epsilon<\epsilon_0$, $\rho^{\epsilon,l}<\rho^{\epsilon,u}$. By Theorem \ref{upperlower2W12}, $\forall \epsilon<\epsilon_0$, Eq. \eqref{ellipticequationintroduction} has a $W^{1,2}(0,1)$ solution $\rho^\epsilon$,
\begin{eqnarray*}
\varlimsup_{\epsilon\to 0}m\qty(\abs{\rho^\epsilon-\hat{\rho}}>2\delta)\le 3\delta.
\end{eqnarray*}
Theorem \ref{TheEqOmega} follows Lemma \ref{LemConInMea}.

\section{Conclusions and Remarks}\label{conclusion}
This paper is devoted to study an initial value parabolic problem with Dirichlet boundary conditions in Eq. \eqref{continuumlimitintroduction}, which originates from the continuum limit of TASEP-LK coupled process. The phase diagram of the steady-state problem in Eq. \eqref{ellipticequationintroduction} is important for understanding both macroscopic and microscopic biological processes, and has been extensively studied by Monte Carlo simulations and numerical computations. We prove many properties of Eqs. \eqref{continuumlimitintroduction},\eqref{ellipticequationintroduction}, including the existence and uniqueness of their solutions and the global attractivity of the steady-state solution. By the method of upper and lower solution, we finally come to the following conclusions.
\begin{enumerate}
  \item Eq. \eqref{ellipticequationintroduction} has a $W^{1,2}(0,1)$ solution with the same phase diagram specified by previous Monte Carlo simulations and numerical computations.
  \item $L^\infty(0,1)$ solution of Eq. \eqref{ellipticequationintroduction} has $C^\infty[0,1]$ regularity.
  \item The solution of Eq. \eqref{ellipticequationintroduction} is unique in $L^\infty(0,1)$.
  \item Eq. \eqref{continuumlimitintroduction} has a unique solution in $C([0,1]\times [0,+\infty))\cap C^{2,1}([0,1]\times (0,+\infty))$ for any continuous initial value, which converges to the solution of Eq. \eqref{ellipticequationintroduction} in $C[0,1]$.
\end{enumerate}

Eqs. \eqref{continuumlimitintroduction}, \eqref{ellipticequationintroduction} studied in this paper are from the simplest case of the TASEP-LK coupled process, in which one species of particles (with the same properties, say speed, attachment and detachment rates, initiation and termination rates {\it etc.}) travel along single one-dimensional track, and during each forward step, particles have only single internal biochemical or biophysical state. In the field of biology and physics, there are actually many general cases. For examples, particles may travel along closed track, have different traveling speeds at different domains of the track, include multiple internal states, switch between different tracks, and/or come from different species. Rich biophysical properties have been obtained by Monte Carlo simulations and numerical computations for many general TASEP-LK coupled processes, but almost no mathematical analysis has been carried out to prove the properties of the corresponding differential equations, or validate the numerical results. In the future, we hope to generalize the methods in this paper to more complex cases, or present more sophisticated methods.

\begin{acknowledgements}
The authors thank Professor Yuan Lou in Ohio State University and Yongqian Zhang in Fudan University for useful discussions.
\end{acknowledgements}

\section*{Data availability}
The data that supports the findings of this study are available within the article.

%
%

\bibliographystyle{spmpsci}      
\bibliography{reference}   

%
%

\end{document}